\newtheorem{thm}{Theorem}[section]
\newtheorem{conj}{Conjecture}[section]
\newtheorem{lem}{Lemma}[section]
\newtheorem{prop}[lem]{Proposition}
\newtheorem{cor}[lem]{Corollary}
\newtheorem{defn}[lem]{Definition}
\newtheorem{rem}[lem]{Remark}
\numberwithin{equation}{section}
\newcommand \eps{\varepsilon}
\newlength{\originalbase}
\newcommand{\spacing}[1]{\setlength{\baselineskip}{#1\originalbase}}
\begin{document}               

\newcommand{\avint}{{- \hspace{-3.5mm} \int}}

\spacing{1}

\title{Gradient estimate for complex Monge-Ampere equation with continuous right hand side}
\author{Xiuxiong Chen, Jingrui Cheng}

\maketitle
\begin{abstract}
In this note, we consider complex Monge-Ampere equation posed on a compact K\"ahler manifold. We show how to get $L^p$($p<\infty$) and $L^{\infty}$ estimate for the gradient of the solution in terms of the continuity of the right hand side.
\end{abstract}
\section{Introduction}

The famous Calabi's volume conjecture asks: For any compact K\"ahler manifold $(M, [\omega_0])$ and for any smooth closed $(1,1)$ form $\alpha\in c_1(M)$, can we find a K\"ahler form $\omega_{\varphi}:=\omega_0+\sqrt{-1}\partial\bar{\partial}\varphi\in[\omega_0]$, such that $Ric(\omega_{\varphi})
=\alpha$? 

It is not hard to see that this problem can be reduced to a problem of solving Monge-Amp$\grave{\text{e}}$re equations: 
Given a function $F$, can we find a K\"ahler form
$\omega_\varphi = \omega_0 +\sqrt{-1} \partial \bar \partial \varphi  \in [\omega_0] $ such that  
\begin{equation}
\omega_\varphi^n = e^F \omega_0^n
\end{equation}
provide that 
\[
\displaystyle \int_M\;e^F \omega_0^n =  \displaystyle \int_M\; \omega_0^n. 
\]
Yau first solved this equation for any $F\in C^3(M)$, in his celebrated solution of Calabi conjecture\cite{Yau78}.   The study of this equation (1.1) has been very intense over the last few decades with various regularity estimates obtained when the right hand side is less smooth.  Here we list a few results (by no means complete)  to motivate our present study. \\

\begin{enumerate}
\item The deep work of Kolodziej which states that if $e^F \in L^p(M)$ for some $p > 1$, then $\varphi \in C^\alpha$ for some $\alpha \in (0,1).$
\item  $C^{2,\alpha}$ estimate when the right hand side $F\in C^{\alpha}$. For real Monge Amp$\grave{\text{e}}$re equation, this  goes back to Caffarelli\cite{Ca1}. For complex Monge-Amp$\grave{\text{e}}$re equation, following the theory of  Evans-Krylov(see \cite{Wang} for details on extension to complex setting), we know $[\varphi]_{C^{2,\alpha'}(M,g)}$ is uniformly
bounded, for each $\alpha'<\alpha$, if $\Delta\varphi$ is uniformly bounded.

\item The important theorem of Chen-He \cite{Chen-He} which proves that if $F \in W^{1,p} $ for $p > 2n$, then we have $n+\Delta \varphi < C.\;$ 
\end{enumerate}

All the above results either estimate the $C^0$ bound(or continuity bound of $\varphi$), or estimate the complex Hessian of $\varphi$. 
 In this note, we will obtain estimates which, in a sense, are in between of these results. Namely we show that it is possible to obtain estimate for the gradient of $\varphi$, when the right hand side $F$ is only assumed to be have certain continuity conditions. Let $\omega(r)$ be the modulus of continuity of $F$. That is, for $r>0$, define $\omega(r)=\sup_{x,y\in M, d(x,y)<r}|F(x)-F(y)|$. Here $d(x,y)$ is the distance function in terms of the smooth background metric $\omega_0$.
The precise results we will obtain are the following apriori estimates:
\begin{thm}\label{THM}
Let $(M,\omega_0)$ be a compact K\"ahler manifold of complex dimension $n$. Let $\varphi$ be a $\omega_0$-plurisubharmonic function solving $(\omega_0+\sqrt{-1}\partial\bar{\partial}\varphi)^n=e^F\omega_0^n.$ Then we have:
\begin{itemize}
\item (i)Assume $F$ is continuous, then for any $p<\infty$, we have $||\nabla\varphi||_{L^p(\omega_0^n)}\le C.$ Here the constant $C$ depends on the background metric $\omega_0$, absolute bound of $F$, modulus of continuity $\omega(r)$ of $F$, $p$ and $n$.
\item (ii)When $n=2$, the above constant $C_1$ does not depend on the continuity of $F$.
\item (iii)If the modulus of continuity of $F$ satisfies $\int_0^1\frac{\omega^2(r)}{r}dr<\infty$, then we have $|\nabla\varphi|\le C_2$ pointwise. Here the constant $C_2$ depends on the background metric $\omega_0$, the absolute bound of $F$, an upper bound for $\int_0^1\frac{\omega^2(r)}{r}dr$ and $n$. 
\end{itemize}
\end{thm}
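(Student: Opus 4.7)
The plan is to first establish the estimates for smooth $F$, with constants depending only on $\|F\|_\infty$, the modulus of continuity $\omega$, the background data, and $p$; the general continuous case then follows by mollification. Kolodziej's theorem quoted in the introduction gives a uniform $C^0$ bound on $\varphi$, which will be used throughout.

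The workhorse is a Bochner-type computation on the K\"ahler manifold $(M,\omega_\varphi)$. For an auxiliary function
\[ u = \log(1 + |\nabla \varphi|^2) + A\varphi, \]
with $A$ large enough to absorb the background bisectional curvature, one obtains an inequality of the form
\[ \Delta_\varphi u \geq -C_0 + \eta\,\operatorname{tr}_{\omega_\varphi}\omega_0 + \frac{2\operatorname{Re}\langle \nabla F, \nabla\varphi\rangle}{1+|\nabla \varphi|^2}, \]
for some small $\eta>0$. The last term is the one that cannot be controlled pointwise when $F$ is merely continuous. The strategy is to replace the pointwise maximum principle by an integral one: multiply the inequality by a test function of the form $e^{pu}$ and integrate against $\omega_\varphi^n = e^F\omega_0^n$. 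Integrating by parts moves the derivative off $F$ and produces a term
\[ -\int_M (F-c)\, \Delta_\varphi \Phi(u,\varphi)\, \omega_\varphi^n \]
for a suitable antiderivative $\Phi$ and any constant $c$. Localizing in balls of radius $r$ and choosing $c=F(y)$ at a reference point $y$, the factor $F-c$ becomes of size $\omega(r)$, and the resulting contribution can be absorbed into the left-hand side once $r$ is small. This yields the $L^p$ estimate of part (i).

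For part (iii), under the Dini-type assumption $\int_0^1 \omega^2(r)/r\,dr<\infty$, the argument is iterated in the Moser fashion: at level $p_k = 2^k p_0$ the absorption step is carried out at a carefully chosen scale $r_k$, and the product of the iteration constants remains bounded precisely when the hypothesis holds, upgrading the $L^{p_k}$ bounds into an $L^\infty$ bound on $|\nabla \varphi|$. Part (ii) exploits the algebraic identity
\[ \operatorname{tr}_{\omega_\varphi}\omega_0 \;=\; e^{-F}\,\operatorname{tr}_{\omega_0}\omega_\varphi \]
which is special to $n=2$ (it comes from $\lambda_1\lambda_2 = e^F$ for the eigenvalues of $g_\varphi$ relative to $g_0$). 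This rewrites the problematic trace term in favor of the much more tractable $\operatorname{tr}_{\omega_0}\omega_\varphi$, which is controlled in $L^1$ by the cohomological identity $\int_M \operatorname{tr}_{\omega_0}\omega_\varphi\,\omega_0^n = n\int_M \omega_0^n$. The estimate then closes without any reference to the modulus of continuity.

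The main obstacle is the absorption step of part (i): one must track constants carefully enough that after integration by parts, the contribution of $F$ is controlled by its oscillation on a chosen scale times a quantity dominated by the left-hand side, with the bookkeeping uniform enough to support Moser iteration in part (iii). This requires a delicate balance between the exponent $p$ in the test function, the size of $A$, and the localization scale $r$, together with careful use of the positivity of $\operatorname{tr}_{\omega_\varphi}\omega_0$ and the $C^0$ bound on $\varphi$.
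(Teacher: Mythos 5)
Your overall strategy is in fact the same as the paper's: the test functions $e^{pu}=(1+|\nabla\varphi|^2)^p e^{pA\varphi}$ are the paper's $u^q$ with $u=e^{H(\varphi)}(|\nabla\varphi|^2+K)$, the derivative is moved off $F$ by an integration by parts, the absorption scale $r$ is tied to the exponent in a Moser iteration, and $n=2$ is handled through the trace identity. But as written there are genuine gaps. First, a structural one in the integration by parts: the $F$-term in the Bochner formula is $2\mathrm{Re}\langle\nabla F,\nabla\varphi\rangle$ paired with the \emph{background} metric, so it is not of the form $\langle\nabla F,\nabla\Phi\rangle_\varphi$ and cannot be unloaded onto $-\int(F-c)\Delta_\varphi\Phi\,\omega_\varphi^n$ as you propose; one must instead write $\nabla F\,e^F=\nabla(e^F)$ and integrate by parts against $\omega_0^n$, and then the terms that come out contain the background Laplacian $\Delta\varphi$, i.e. $n+\Delta\varphi=\operatorname{tr}_{\omega_0}\omega_\varphi$, multiplied by your weight, plus (after Cauchy--Schwarz converting $|\nabla u|$ into $|\nabla_\varphi u|_\varphi$) another $(n+\Delta\varphi)$ term with coefficient of order $p\,\omega(r)^2$. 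Second, and decisively, your stated differential inequality keeps only $\eta\operatorname{tr}_{\omega_\varphi}\omega_0$ as a good term (note also the sign: you need $u=\log(1+|\nabla\varphi|^2)-A\varphi$, since $\Delta_\varphi(-A\varphi)=-An+A\operatorname{tr}_{\omega_\varphi}\omega_0$) and discards the positive term $\varphi_{i\bar i}^2/(1+\varphi_{i\bar i})\sim (n+\Delta\varphi)$ that the paper retains. For $n\ge 3$, $\operatorname{tr}_{\omega_\varphi}\omega_0$ does \emph{not} control $n+\Delta\varphi$ (eigenvalues can degenerate), so there is nothing left on the good side with the right homogeneity to absorb the $(n+\Delta\varphi)$ contributions, however small $\omega(r)$ is; the absorption only works against a retained $(n+\Delta\varphi)$ term of unit size, which forces the scale choice $\omega(r_p)\sim p^{-1/2}$, and it is exactly this choice that turns the Dini condition $\int_0^1\omega^2(r)r^{-1}dr<\infty$ into summability of the iteration constants in part (iii). (Your localization-plus-$F(y)$ subtraction is an acceptable substitute for the paper's mollification $F=F_r+(F-F_r)$, provided you account for the cutoff-derivative terms of size $1/r$, which play the role of $|\nabla F_r|\le C/r$; also the claimed inequality for $\Delta_\varphi\log(1+|\nabla\varphi|^2)$ requires the usual Blocki-type handling of the $-|\nabla_\varphi w|_\varphi^2/w^2$ term and is not automatic.)

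For part (ii) the identity $\operatorname{tr}_{\omega_\varphi}\omega_0=e^{-F}(n+\Delta\varphi)$ in dimension $2$ is indeed the special feature, but the mechanism you describe --- invoking the cohomological bound $\int_M\operatorname{tr}_{\omega_0}\omega_\varphi\,\omega_0^n=n\int_M\omega_0^n$ --- cannot close the estimate: the dangerous terms are $(n+\Delta\varphi)$ multiplied by the weight $(1+|\nabla\varphi|^2)^p$, so an unweighted $L^1$ bound on the trace is of no use. What works (and is what the paper does) is to take the constant $A$ (the paper's $C$) large, so that the good term $A\operatorname{tr}_{\omega_\varphi}\omega_0\cdot(\text{weight})=Ae^{-F}(n+\Delta\varphi)\cdot(\text{weight})$ dominates, pointwise in the integrand, every $(n+\Delta\varphi)\cdot(\text{weight})$ term produced by the integration by parts; this uses only $\|F\|_{L^\infty}$ and in fact yields exponential integrability of $|\nabla\varphi|^2$. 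With the $(n+\Delta\varphi)$ term restored on the good side and the bookkeeping above, your outline becomes essentially the paper's proof; without it, parts (i)--(iii) do not close for $n\ge3$ and part (ii) does not close as described.
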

As a direct consequence of the point (iii) above, we can conclude that:
\begin{cor}
If $F\in C^{\alpha}$ for some $\alpha>0$, then we have $|\nabla\varphi|\le C_3$ pointwise. Here the constant $C_3$ depends on the background metric $\omega_0$, $C^{\alpha}$ norm of $F$ and $n$.
\end{cor}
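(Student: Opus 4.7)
The plan is to deduce the corollary as an immediate consequence of part (iii) of Theorem \ref{THM}, by verifying that the $C^\alpha$ hypothesis on $F$ implies the Dini-type integrability condition assumed there.

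First, I would translate the $C^\alpha$ bound into a bound on the modulus of continuity: since $F \in C^\alpha(M)$ with $M$ compact, for every $r > 0$,
\[
\omega(r) = \sup_{d(x,y) < r} |F(x) - F(y)| \le [F]_{C^\alpha} \, r^\alpha,
\]
and in particular $F$ is uniformly bounded with $\|F\|_{L^\infty} \le \|F\|_{C^\alpha}$.

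Next, I would plug this bound into the Dini-type integral of part (iii):
\[
\int_0^1 \frac{\omega^2(r)}{r}\, dr \;\le\; [F]_{C^\alpha}^2 \int_0^1 r^{2\alpha - 1}\, dr \;=\; \frac{[F]_{C^\alpha}^2}{2\alpha} \;<\; \infty,
\]
the integral being convergent precisely because $\alpha > 0$. Thus the hypotheses of part (iii) are satisfied, with $\|F\|_{L^\infty}$ and the upper bound for the integral both controlled by $\|F\|_{C^\alpha}$ and $\alpha$.

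Finally, applying Theorem \ref{THM}(iii) yields $|\nabla \varphi| \le C_2$ pointwise, where the dependence of $C_2$ on $\omega_0$, $\|F\|_{L^\infty}$, $\int_0^1 \omega^2(r)/r\, dr$, and $n$ translates into a dependence on $\omega_0$, $\|F\|_{C^\alpha}$, and $n$ (with $\alpha$ absorbed into the $C^\alpha$ norm, or tracked as an additional parameter). There is no genuine obstacle here: the content of the corollary is entirely contained in part (iii), and the only task is this elementary verification of the Dini condition for Hölder functions.
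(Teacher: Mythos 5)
Your argument is correct and matches the paper's own proof: the paper likewise observes that taking $\omega(r)=Ar^{\alpha}$ makes the Dini-type integral $\int_0^1\omega^2(r)/r\,dr$ finite, so the corollary follows immediately from Theorem \ref{THM}(iii). Your explicit computation of the integral and the bookkeeping of the constant's dependence are just a slightly more detailed write-up of the same one-line verification.
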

Indeed, this follows immediately from point (iii) if one takes $\omega(r)=Ar^{\alpha}$. 

\begin{rem}
It is straighforward to generalize the results here to the case of Dirichlet problem on bounded domains, provided that the domain and boundary data is ``reasonable" which allows one to get global $C^0$ estimate and boundary gradient estimate. 
\end{rem}

There have been some previous works on the gradient estimate for complex Monge-Amp$\grave{\text{e}}$re. In \cite{Blocki}, the author showed how to get $C^1$ estimate directly from $C^0$ estimate, using a maximum principle argument, but this estimate will require a Lipschitz bound on the right hand side. This estimate has been improved in \cite{Chen-He}, in which the assumption is weakened to $\nabla F\in L^p$ with $p>2n$ and the $C^{1,1}$ bound is obtained. 

In this note, we obtain gradient estimates without assuming any bound on the derivatives of the right hand side, instead, we only impose some continuity assumptions. Next we briefly explain the ideas.

Let $\Delta_{\varphi}$ be the Laplace operator defined by the metric $\omega_0+\sqrt{-1}\partial\bar{\partial}\varphi$. Roughly speaking, we will calculate the term $\Delta_{\varphi}(|\nabla\varphi|^2)$.
In the outcome of the calculation, the term involving the right hand side is given by $2\nabla\varphi\cdot\nabla F$. Since we do not wish to assume any bound on $\nabla F$, we will do integral estimates and integrating by parts in this term. When $n=2$, this turns out to be sufficient to close the estimate and we only use the absolute bound of $F$ and not its continuity. As a result, we get the estimate said in point (ii) of Theorem \ref{THM}. For general dimension, we need to decompse $F$ as $F=F_g+F_b$(good plus bad), where $F_g$ has bounded gradient and $F_b$ has small $C^0$ bound, which uses the continuity of $F$. Then we need to integrate by parts in the term $\nabla\varphi\cdot\nabla F_b$, and the smallness of $F_b$ in $C^0$ norm allows us to close the estimate. These will be done in section 2. 
In order to get $L^{\infty}$ estimate, we need to use the Nash-Moser iteration, and we need to carefully keep track of how $|\nabla F_g|$ grows when we take $F_b$ to have smaller and smaller $C^0$ norm. This accounts for the integrability condition for the modulus of continuity required in point (iii) of Theorem \ref{THM}. We will carry out the details in section 3.

This result is still very far from the optimal result one can hope for. Indeed, from the deep work of Caffarelli \cite{Ca1}, \cite{Ca2} on the Dirichlet problem for real Monge-Amp$\grave{\text{e}}$re equations(or when the solution is strictly convex), it looks quite reasonable to believe the same results would hold for complex analogue as well. That is, it may be possible to get $C^{1,\alpha}$ estimate of $\varphi$ when $F$ is only bounded and also $C^{2,\alpha}$ estimate of $\varphi$ when $F\in C^{\alpha}$. This question is already raised in \cite{Chen-He}. There have been some partial results which shows the solution is $C^{2,\alpha}$ by assuming the solution already has some absolute bound for the complex Hessian, see the work of \cite{Chen-Wang}, \cite{DZZ} and \cite{Wang}. The work \cite{CLZ} even shows that it suffices to assume the solution is in $C^{1,\beta}$($\beta$ close to 1) in order for improving the regularity to $C^{2,\alpha}$. However, the question of getting $C^{2,\alpha}$ estimate without any extra regularity assumption on the solution remains open and seems out of reach for the moment.

\begin{conj}  Do we have $C^{1,\alpha}$ estimate of $\varphi$ when $F$ is only bounded and also $C^{2,\alpha}$ estimate of $\varphi$ when $F\in C^{\alpha}?$
\end{conj}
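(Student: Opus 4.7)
The conjecture is open (as the paper itself acknowledges), so my response is a research plan with a candid accounting of where existing tools fail. The natural strategy is to transport Caffarelli's partial regularity program \cite{Ca1,Ca2} from the real Monge-Amp\`ere equation to the complex one. In the real case the two pillars are: (a) sections $S_h(x_0)=\{\varphi<\varphi(x_0)+\ell_{x_0}+h\}$ of a strictly convex solution are, after the appropriate affine normalization, comparable to the unit ball by John's lemma; and (b) $\det D^2\varphi=f$ is invariant under volume-preserving affine maps, so one can rescale $S_h$ to a fixed-size domain and apply compactness and Liouville-type arguments. One would try to imitate this story on a local chart, using Kolodziej's $C^0$ estimate to control the geometry of complex sections from above and below.

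For the $C^{1,\alpha}$ part with $F$ only bounded, the concrete plan would be: (1) define a suitable complex section based on $\varphi-\ell_{x_0}$, where $\ell_{x_0}$ is a local K\"ahler potential of $\omega_0$ at $x_0$; (2) prove a Liouville-type theorem stating that entire plurisubharmonic solutions of $\det\varphi_{i\bar j}=\mathrm{const}$ on $\mathbb{C}^n$ with appropriate growth must be quadratic; (3) combine this rigidity with a compactness/blow-up argument to extract an excess-decay for $\varphi$ at each dyadic scale around each point, summing to the $C^{1,\alpha}$ modulus. For the $C^{2,\alpha}$ part with $F\in C^\alpha$, the standard reduction is to establish $\Delta\varphi\in L^\infty$ (equivalently $\omega_\varphi$ bounded above), after which the complex Evans-Krylov theorem \cite{Wang} delivers $C^{2,\alpha'}$ for every $\alpha'<\alpha$. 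A natural first attempt is a Pogorelov-type maximum-principle estimate for a quantity such as $\log(n+\Delta\varphi)+A|\nabla\varphi|^2-B\varphi$, localized on complex sections; the gradient bounds already proved in Theorem~\ref{THM} can be used to handle the first-order term in the resulting inequality.

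The principal obstacle in both parts is the failure of affine invariance: the complex Monge-Amp\`ere equation is only invariant under $U(n)$, not under the full complex-linear group $GL(n,\mathbb{C})$, so one cannot renormalize complex sections to a canonical ``round'' shape. This is precisely why the existing partial results \cite{Chen-Wang,DZZ,Wang,CLZ} all impose either an a priori Hessian bound or $C^{1,\beta}$ regularity with $\beta$ close to $1$: with that input, the shape of sections is already controlled and the compactness step goes through. Removing these hypotheses appears to require a genuinely new ingredient --- for instance, a workable substitute for John's lemma in complex geometry (perhaps via $\omega_\varphi$-harmonic coordinates or pluripotential-capacity comparisons), or a quantitative Liouville theorem on $\mathbb{C}^n$ that bypasses normalization entirely. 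I expect this Liouville-plus-normalization step to be the decisive difficulty; without such a new input I do not see a direct path, and the conjecture should be expected to remain out of reach.
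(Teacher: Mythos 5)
The statement you were asked about is a conjecture, posed in the paper explicitly as an open problem; the paper contains no proof of it, so there is nothing to compare your argument against, and you are right not to claim one. Your submission is a research outline, and judged as such it does correctly locate the general territory (Caffarelli's section/normalization program and the known partial results that assume a Hessian or $C^{1,\beta}$ bound). However, several of its concrete steps are either inaccurate or known to fail. First, the invariance claim is wrong as stated: under $z\mapsto Az$ one has $\det\varphi_{i\bar j}\mapsto |\det A|^2\det\varphi_{i\bar j}$, so the complex Monge-Amp\`ere operator \emph{is} invariant under volume-preserving complex-linear maps, exactly as the real operator is under $SL(n,\mathbb{R})$. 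The genuine obstruction is different: sublevel sets of a plurisubharmonic function minus a pluriharmonic ``linear'' part are not convex, so there is no analogue of John's lemma and no canonical normalization of sections; identifying the obstruction correctly matters because it changes what a substitute tool would have to do.

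Second, and more seriously, the Liouville theorem you propose in step (2) is false: entire plurisubharmonic solutions of $\det\varphi_{i\bar j}=\mathrm{const}$ on $\mathbb{C}^n$ with natural growth need not be quadratic. The paper itself reproduces He's Pogorelov-type example $u=n^{2/n}(1+|z_1|^2+\cdots+|z_{n-1}|^2)|z_n|^{2/n}$ solving $\det u_{i\bar j}=1$ in the viscosity sense \cite{He12}, and the bibliography already contains the non-flat complete Calabi--Yau metrics on $\mathbb{C}^n$ of \cite{LiYang-17}, \cite{Conlon-Rochon17}, \cite{GS2017}, which rule out quadratic rigidity even among smooth solutions with maximal volume growth. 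This is fatal for the architecture of your plan, not just for one lemma: a sections/blow-up/Liouville scheme is intrinsically local, while the paper stresses that He's example destroys interior gradient bounds for $n\ge 3$, so the conjectured estimates, if true, must exploit the global structure of the compact K\"ahler manifold in an essential way. Your outline has no mechanism by which compactness of $M$ enters, so even as a program it attacks a local statement that is known to be false; any viable approach must be reorganized around a global ingredient rather than a local normalization-plus-Liouville argument.
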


We would like to emphasize that we are working in compact manifold without boundary. On local $\mathbb{C}^n$, one can construct Pogorelov type counterexamples. Indeed, one may consider:
\[
 u(z_1, z_2,\cdots z_n) = n^{2\over n} (1 +|z_1|^2 + \cdots |z_{n-1}|^2) |z_n|^{2\over n}
\]
It is shown in \cite{He12} that this function solves the complex Monge-Amp$\grave{\text{e}}$re equation
\begin{equation}
\det u_{i\bar j} = 1
\end{equation}
in the viscosity sense in $\mathbb{C}^n.\;$. This example shows that there is no interior gradient bound at least for $n\ge 3$. Thus, for our conjecture to be true,  the global nature of compact K\"ahler manifold must come into play in a crucial way. \\

\section{Estimate of the gradient in $L^p$}
The goal of this section is to prove point (i) and (ii) in the Theorem \ref{THM}. Now we set up the notations. In the following $\Delta$ means the Laplace operator defined by the background metric $\omega_0$, namely $\Delta=g^{i\bar{j}}\partial_{i\bar{j}}$ and $\omega_0=\sqrt{-1}g_{i\bar{j}}dz_i\wedge d\bar{z_j}$ in local coordinates. $\Delta_{\varphi}$ means the Laplace operator defined by the metric $\omega_{\varphi}:=\omega_0+\sqrt{-1}\partial\bar{\partial}\varphi$. $|\nabla\cdot|$ means the gradient is taken under the metric $\omega_0$, and $|\nabla_{\varphi}\cdot|^2_{\varphi}$ means the gradient is taken under $\omega_{\varphi}$.

First we will do some calculations which will work in any dimension, then we will see how the dimension 2 assumption will help us. Before we start the calculation, we note that the $C^0$ bound of $\varphi$ can be estimated by the $C^0$ bound of $F$, according to the result by Kolodziej(or one can use Alexandrov maximum principle for an elementary proof.)

Let $H(\varphi)=-C\varphi+\delta\varphi^2$. First we compute
\begin{equation}\label{2.1}
\begin{split}
\Delta_{\varphi}\big(&e^{H(\varphi)}(|\nabla\varphi|^2+K)\big)=\Delta_{\varphi}(e^{H(\varphi)})(|\nabla\varphi|^2+K)+e^{H(\varphi)}\Delta_{\varphi}(|\nabla\varphi|^2)\\
&+2e^{H(\varphi)}H'(\varphi)\nabla_{\varphi}\varphi\cdot_{\varphi}\nabla_{\varphi}(|\nabla\varphi|^2).
\end{split}
\end{equation}
To proceed further, we need a computation and estimate for $\Delta_{\varphi}(|\nabla\varphi|^2)$:
\begin{lem}
There exists a constant $C_0$, depending only on the bisectional curvature of  background metric $\omega_0$, such that under normal coordinates at a point $p\in M$, we have
\begin{equation}\label{2.2}
\Delta_{\varphi}(|\nabla\varphi|^2)(p)\ge -C_0tr_{\omega_{\varphi}}\omega_0|\nabla\varphi|^2+\frac{|\varphi_{i\alpha}|^2}{1+\varphi_{i\bar{i}}}+\frac{\varphi_{i\bar{i}}^2}{1+\varphi_{i\bar{i}}}+2\nabla\varphi\cdot\nabla F|_p.
\end{equation}
\end{lem}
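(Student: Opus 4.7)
The plan is to compute $\Delta_{\varphi}(|\nabla\varphi|^2)$ pointwise at $p$ in a well-chosen local frame. Specifically, pick holomorphic coordinates centered at $p$ that are K\"ahler normal for $\omega_0$, so that $g_{i\bar j}(p)=\delta_{ij}$ and all first derivatives of $g_{i\bar j}$ vanish at $p$, and in addition diagonalize the complex Hessian $\varphi_{i\bar j}(p)$. In such a frame $g_\varphi^{k\bar l}(p)=\delta_{kl}/(1+\varphi_{k\bar k})$, and the bisectional curvature of $\omega_0$ enters through the identity $\partial_k\partial_{\bar l}g^{i\bar j}(p)=-\partial_k\partial_{\bar l}g_{i\bar j}(p)$, obtained by differentiating $g^{i\bar m}g_{m\bar j}=\delta^i_j$ twice and using that first derivatives of $g$ vanish at $p$.

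Next, expand $\partial_k\partial_{\bar l}(g^{i\bar j}\varphi_i\varphi_{\bar j})$ by the product rule. All terms containing a single derivative of $g^{i\bar j}$ drop at $p$; the double derivative of $g^{i\bar j}$ produces the curvature piece $R_{i\bar j k\bar l}\varphi_i\varphi_{\bar j}$; and the remaining four surviving terms are $\varphi_{ik\bar l}\varphi_{\bar i}+\varphi_{ik}\varphi_{\bar i\bar l}+\varphi_{i\bar l}\varphi_{\bar i k}+\varphi_i\varphi_{\bar i k\bar l}$. Contracting with $g_\varphi^{k\bar l}(p)$ and estimating the curvature piece via any upper bound $C_0$ on the bisectional curvature of $\omega_0$ gives the lower bound $-C_0\, tr_{\omega_\varphi}\omega_0\cdot|\nabla\varphi|^2$. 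The purely second-order pieces aggregate into $\sum_{i,k}(|\varphi_{ik}|^2+|\varphi_{i\bar k}|^2)/(1+\varphi_{k\bar k})$; since $\varphi_{i\bar k}(p)=\varphi_{i\bar i}\delta_{ik}$ in the chosen frame, the mixed sum collapses to $\sum_i\varphi_{i\bar i}^2/(1+\varphi_{i\bar i})$, and relabeling via the symmetry $\varphi_{ik}=\varphi_{ki}$ rewrites the holomorphic sum as $\sum_{i,\alpha}|\varphi_{i\alpha}|^2/(1+\varphi_{i\bar i})$.

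The two remaining third-order pieces are handled by differentiating the equation $\log\det(g_{i\bar j}+\varphi_{i\bar j})=F+\log\det g_{i\bar j}$ in $\partial_i$. At $p$, since first derivatives of $g_{i\bar j}$ vanish, this reduces to the clean identity $g_\varphi^{k\bar l}\varphi_{k\bar l i}(p)=F_i(p)$; combined with the K\"ahler commutation $\varphi_{ik\bar k}=\varphi_{k\bar k i}$ at $p$, this yields $\sum_{i,k}\varphi_{ik\bar k}\varphi_{\bar i}/(1+\varphi_{k\bar k})=\sum_i F_i\varphi_{\bar i}$, and the conjugate piece contributes $\sum_i F_{\bar i}\varphi_i$, the two summing to $2\nabla\varphi\cdot\nabla F$. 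Assembling all contributions then produces the claimed inequality with a constant $C_0$ depending only on the bisectional curvature of $\omega_0$. There is no essential obstacle here; the main care needed is in the sign conventions for the curvature tensor and in using the K\"ahler property to justify commuting mixed third derivatives at $p$, so that the differentiated Monge--Amp\`ere equation can be invoked without any assumed regularity of $F$ beyond what the a priori setting already provides.
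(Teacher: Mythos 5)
Your proposal is correct and follows essentially the same route as the paper: normal coordinates at $p$ diagonalizing $\varphi_{i\bar j}$, product-rule expansion of $\Delta_\varphi(g^{\alpha\bar\beta}\varphi_\alpha\varphi_{\bar\beta})$ with the curvature arising from the second derivatives of $g^{\alpha\bar\beta}$, and the third-order terms converted into $2\nabla\varphi\cdot\nabla F$ by differentiating the Monge--Amp\`ere equation at $p$. The only cosmetic difference is that you phrase the curvature term via $\partial_k\partial_{\bar l}g^{i\bar j}(p)=-\partial_k\partial_{\bar l}g_{i\bar j}(p)$ and invoke a commutation of third derivatives (which is automatic for partial derivatives of a smooth function at a point with vanishing Christoffel symbols), whereas the paper writes these identities directly.
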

\begin{proof}
We recall that, for any point $p\in M$, we can choose coordinate in a neighborhood of $p$ so that $g_{i\bar{j}}(p)=\delta_{ij}$, $\partial_{z_k}g_{i\bar{j}}(p)=0$, $\varphi_{i\bar{j}}(p)$ is diagnal. Here $g_{i\bar{j}}$ is the metric tensor for the K\"ahler metric $\omega_0$, that is, $\omega_0=\sqrt{-1}g_{i\bar{j}}dz_i\wedge d\bar{z_j}$. 
Under this coordinate, we compute:
\begin{equation*}
\begin{split}
\Delta_{\varphi}&(|\nabla\varphi|^2)=\frac{1}{1+\varphi_{i\bar{i}}}(g^{\alpha\bar{\beta}}\varphi_{\alpha}\varphi_{\bar{\beta}})_{i\bar{i}}=\frac{1}{1+\varphi_{i\bar{i}}}\big((g^{\alpha\bar{\beta}})_{i\bar{i}}\varphi_{\alpha}\varphi_{\bar{\beta}}+g^{\alpha\bar{\beta}}(\varphi_{\alpha}\varphi_{\bar{\beta}})_{i\bar{i}}\\
&+2Re\big((g^{\alpha\bar{\beta}})_i(\varphi_{\alpha}\varphi_{\bar{\beta}})_{\bar{i}}\big).
\end{split}
\end{equation*}
Using our assumption of the chosen coordinate, we see that 
\begin{equation*}
(g^{\alpha\bar{\beta}})_{i\bar{i}}=R_{i\bar{i}\alpha\bar{\beta}},\,\,(g^{\alpha\bar{\beta}})_i=0.
\end{equation*}
Hence, when evaluated at point $p$,
\begin{equation*}
\Delta_{\varphi}(|\nabla\varphi|^2)=\frac{R_{i\bar{i}\alpha\bar{\beta}}\varphi_{\alpha}\varphi_{\bar{\beta}}}{1+\varphi_{i\bar{i}}}+\frac{g^{\alpha\bar{\beta}}(\varphi_{\alpha}\varphi_{\bar{\beta}})_{i\bar{i}}}{1+\varphi_{i\bar{i}}}.
\end{equation*}
Moreover,
\begin{equation*}
\begin{split}
\frac{1}{1+\varphi_{i\bar{i}}}&g^{\alpha\bar{\beta}}(\varphi_{\alpha}\varphi_{\bar{\beta}})_{i\bar{i}}=\frac{1}{1+\varphi_{i\bar{i}}}(\varphi_{\alpha}\varphi_{\bar{\alpha}})_{i\bar{i}}=\frac{|\varphi_{\alpha i}|^2}{1+\varphi_{i\bar{i}}}+\frac{|\varphi_{\alpha\bar{i}}|^2}{1+\varphi_{i\bar{i}}}\\
&+2Re\big(\frac{\varphi_{\alpha i\bar{i}}\varphi_{\bar{\alpha}}}{1+\varphi_{i\bar{i}}}\big).
\end{split}
\end{equation*}
Differentiating the equation in the coordinate $\alpha$ and evaluating at point $p$, we find that 
\begin{equation*}
\frac{\varphi_{i\bar{i}\alpha}}{1+\varphi_{i\bar{i}}}=F_{\alpha}.
\end{equation*}
That is,
\begin{equation*}
2Re\big(\frac{\varphi_{\alpha i\bar{i}}\varphi_{\bar{\alpha}}}{1+\varphi_{i\bar{i}}}\big)=2Re(F_{\alpha}\varphi_{\bar{\alpha}})=2\nabla\varphi\cdot\nabla F.
\end{equation*}
Now we can easily estimate:
\begin{equation*}
\begin{split}
\Delta_{\varphi}(|\nabla\varphi|^2)&=\frac{R_{i\bar{i}\alpha\bar{\beta}}\varphi_{\alpha}\varphi_{\bar{\beta}}}{1+\varphi_{i\bar{i}}}+\frac{|\varphi_{i\alpha}|^2}{1+\varphi_{i\bar{i}}}+\frac{\varphi_{i\bar{i}}^2}{1+\varphi_{i\bar{i}}}+2\nabla\varphi\cdot\nabla F\\
&\ge -C_0tr_{\varphi}g|\nabla\varphi|^2+\frac{|\varphi_{i\alpha}|^2}{1+\varphi_{i\bar{i}}}+\frac{\varphi_{i\bar{i}}^2}{1+\varphi_{i\bar{i}}}+2\nabla\varphi\cdot\nabla F.
\end{split}
\end{equation*}
Here the constant $C_0$ depends only on the curvatre bound of the background metric. 
\end{proof}
In the following, we will use $tr_{\varphi}g$ to denote $tr_{\omega_{\varphi}}\omega_0$.

Now we continue the calculation:
\begin{equation}\label{2.3}
\Delta_{\varphi}(e^{H(\varphi)})=e^{H(\varphi)}H'(\varphi)\Delta_{\varphi}\varphi+e^{H(\varphi)}(H''+(H')^2)|\nabla_{\varphi}\varphi|_{\varphi}^2.
\end{equation}
Also
\begin{equation}\label{2.4}
\begin{split}
2\nabla_{\varphi}&\varphi\cdot\nabla_{\varphi}(|\nabla\varphi|^2)=2Re\bigg(\frac{\varphi_{\alpha}\varphi_{\bar{\alpha}\bar{i}}\varphi_{\bar{i}}}{1+\varphi_{i\bar{i}}}\bigg)+2Re\bigg(\frac{|\varphi_i|^2\varphi_{i\bar{i}}}{1+\varphi_{i\bar{i}}}\bigg)\\
&=2Re\bigg(\frac{\varphi_{\alpha}\varphi_{\bar{\alpha}\bar{i}}\varphi_{\bar{i}}}{1+\varphi_{i\bar{i}}}\bigg)+2|\nabla\varphi|^2-2|\nabla_{\varphi}\varphi|_{\varphi}^2.
\end{split}
\end{equation}
After the completion of square:
\begin{equation*}
(H')^2|\nabla_{\varphi}\varphi|_{\varphi}^2|\nabla\varphi|^2+\frac{|\varphi_{i\alpha}|^2}{1+\varphi_{i\bar{i}}}+2H'Re\bigg(\frac{\varphi_{\alpha}\varphi_{\bar{\alpha}\bar{i}}\varphi_{\bar{i}}}{1+\varphi_{i\bar{i}}}\bigg)=\frac{1}{1+\varphi_{i\bar{i}}}|H'\varphi_i\varphi_{\alpha}+\varphi_{i\alpha}|^2.
\end{equation*}
Dropping this square, and denote $u=e^H(|\nabla\varphi|^2+K)$, we use (\ref{2.1}), (\ref{2.2}), (\ref{2.3}) and (\ref{2.4}) to see that: 
\begin{equation*}
\begin{split}
\Delta_{\varphi}&u\ge e^HH'\Delta_{\varphi}\varphi(|\nabla\varphi|^2+K)+e^H(H')^2K|\nabla_{\varphi}\varphi|_{\varphi}^2+e^HH''|\nabla_{\varphi}\varphi|_{\varphi}^2|\nabla\varphi|^2\\
&-C_0e^Htr_{\varphi}g|\nabla\varphi|^2+e^H\frac{\varphi_{i\bar{i}}^2}{1+\varphi_{i\bar{i}}}+2e^H\nabla\varphi\cdot\nabla F+2e^HH'(|\nabla\varphi|^2-|\nabla_{\varphi}\varphi|_{\varphi}^2).
\end{split}
\end{equation*}
Now note that if we choose $2\delta||\varphi||_0=\frac{C}{2}$, we conclude
\begin{equation*}
H'(\varphi)=-C+2\delta\varphi\le-\frac{C}{2}.
\end{equation*}
Hence
\begin{equation*}
\begin{split}
e^H&H'\Delta_{\varphi}\varphi(|\nabla\varphi|^2+K)-C_0e^Htr_{\varphi}g|\nabla\varphi|^2\ge(\frac{C}{2}-C_0)e^Htr_{\varphi}g|\nabla\varphi|^2\\
&-1.5Cne^H(|\nabla\varphi|^2+K)+\frac{CK}{2}e^Htr_{\varphi}g\\
&\ge \frac{C}{4}tr_{\varphi}gu-1.5Cnu.
\end{split}
\end{equation*}
Also if $K>2n$,
\begin{equation*}
e^H\frac{\varphi_{i\bar{i}}^2}{1+\varphi_{i\bar{i}}}=e^H(n+\Delta\varphi)-2ne^H\ge e^H(n+\Delta\varphi)-u.
\end{equation*}
Finally
\begin{equation*}
2e^HH'|\nabla\varphi|^2\ge-3Ce^H|\nabla\varphi|^2\ge -3Cu.
\end{equation*}
So we obtain 
\begin{equation}
\begin{split}
\Delta_{\varphi}&u\ge \frac{C}{4}tr_{\varphi}g u+2\delta e^H|\nabla_{\varphi}\varphi|_{\varphi}^2|\nabla\varphi|^2+e^H(n+\Delta\varphi)+2e^H\nabla\varphi\cdot\nabla F\\
&-(1+4.5C)u+e^H\frac{C^2K}{4}|\nabla_{\varphi}\varphi|_{\varphi}^2.
\end{split}
\end{equation}
Now let $q\ge 1$, we can compute
\begin{equation}\label{1.131}
\begin{split}
&\int_M(q-1)u^{q-2}|\nabla_{\varphi}u|_{\varphi}^2dvol_{\varphi}=\int_Mu^{q-1}(-\Delta_{\varphi}u)dvol_{\varphi}\le -\int_M\frac{C}{4}u^qtr_{\varphi}gdvol_{\varphi}\\
&-\int_Mu^{q-1}2\delta e^H|\nabla_{\varphi}\varphi|_{\varphi}^2|\nabla\varphi|^2dvol_{\varphi}-\int_Mu^{q-1}e^H(n+\Delta\varphi)dvol_{\varphi}\\
&-\int_M2u^{q-1}e^H\nabla\varphi\cdot\nabla Fdvol_{\varphi}+\int_M(1+4.5C)u^qdvol_{\varphi}\\
&-\int_Mu^{q-1}e^H\frac{C^2K}{4}|\nabla_{\varphi}\varphi|_{\varphi}^2dvol_{\varphi}.
\end{split}
\end{equation}
The point of this whole calculation is to handle the term $\nabla\varphi\cdot\nabla F$ above.  We will do this next.

\subsection{When $n=2$}
In this case, the situation is much better and we have the following estimate:
\begin{prop}
When $n=2$, there exists a constant $\eps_0>0$, $C_{2.1}>0$, depending only on the absolute bound of $F$ and the bisectional curvature bound of background metric $\omega_0$, such that 
\begin{equation}
\int_M\exp\big(\eps_0|\nabla\varphi|^2\big)\omega_0^n\le C_{2.1}.
\end{equation}
\end{prop}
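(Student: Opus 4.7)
The plan is to take the integral inequality just established, integrate by parts the only term on the right-hand side that involves a derivative of $F$, and then exploit the dimension-two identity $n+\Delta\varphi = e^F\,tr_{\varphi}g$. Since $\omega_\varphi^n = e^F\omega_0^n$ gives $dvol_\varphi = e^F\,dvol_g$, one rewrites
\[
-2\int_M u^{q-1}e^H\,\nabla\varphi\cdot\nabla F\,dvol_\varphi = -2\int_M u^{q-1}e^H\,\nabla\varphi\cdot\nabla(e^F)\,dvol_g
\]
and integrates by parts in the background metric. This produces three terms in which $F$ appears only through the bounded factor $e^F$: a $\Delta\varphi$ term $2\int u^{q-1}e^H\Delta\varphi\,dvol_\varphi$, a cross term $2(q-1)\int u^{q-2}e^H\nabla u\cdot\nabla\varphi\,dvol_\varphi$, and a lower-order term $2\int u^{q-1}e^H H'(\varphi)|\nabla\varphi|^2\,dvol_\varphi$.

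In dimension two, writing $\lambda_i = 1+\varphi_{i\bar i}$, the equation $\lambda_1\lambda_2 = e^F$ yields $n+\Delta\varphi = \lambda_1+\lambda_2 = e^F(1/\lambda_1+1/\lambda_2) = e^F\,tr_{\varphi}g$. This identity lets us trade $\Delta\varphi$ for $tr_{\varphi}g$: combined with the preexisting good term $-\int u^{q-1}e^H(n+\Delta\varphi)\,dvol_\varphi$, the $\Delta\varphi$-contribution is bounded above by $C\int u^{q-1}\,tr_{\varphi}g\,dvol_\varphi$, which using the pointwise lower bound $u \ge e^{\min H}K$ becomes absorbable into the good term $-\tfrac{C}{4}\int u^q\,tr_{\varphi}g\,dvol_\varphi$. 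The $H'(\varphi)|\nabla\varphi|^2$ term is at most $3C\int u^q\,dvol_\varphi$ via $|H'|\le 1.5C$ and $e^H|\nabla\varphi|^2\le u$.

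The cross term is the delicate one. Diagonalising $\omega_\varphi$ at a point and applying Cauchy-Schwarz gives $|\nabla u|_g^2 \le (n+\Delta\varphi)|\nabla_\varphi u|_\varphi^2$, which in dimension two equals $e^F\,tr_{\varphi}g\,|\nabla_\varphi u|_\varphi^2$. Young's inequality with a parameter $\tau$ independent of $q$ then splits the cross term into a piece bounded by $\tfrac{q-1}{2}\int u^{q-2}|\nabla_\varphi u|_\varphi^2\,dvol_\varphi$, absorbable into the left-hand side of the integral inequality, and a piece bounded by $C(q-1)\int u^{q-1}\,tr_{\varphi}g\,dvol_\varphi$ (after using $u^{q-2}e^H|\nabla\varphi|^2\le u^{q-1}$); the latter is absorbed into the good $tr_{\varphi}g$-term once $K$ is chosen proportional to $q-1$.

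After these absorptions and combining with the AM-GM bound $tr_{\varphi}g \ge n\,e^{-\|F\|_{\infty}/n}$, one obtains $\int_M u^q\,dvol_\varphi \le (Cq)^q$ for a constant $C$ depending only on $\|F\|_{\infty}$ and the bisectional curvature of $\omega_0$. Since $u \ge e^{\min H}|\nabla\varphi|^2$ pointwise, the Taylor expansion
\[
\int_M\exp(\varepsilon_0|\nabla\varphi|^2)\,\omega_0^n = \sum_{q\ge 0}\frac{\varepsilon_0^q}{q!}\int_M|\nabla\varphi|^{2q}\,\omega_0^n
\]
together with Stirling's formula $q!/q^q \sim e^{-q}\sqrt{2\pi q}$ converges for any $\varepsilon_0$ smaller than a constant depending only on $C$, yielding the proposition. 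The main technical obstacle is the factor $(q-1)$ produced in the cross term by integration by parts, which forces $K$ to grow linearly in $q$; tracking this dependence carefully is what produces the Moser-Trudinger-scale bound $(Cq)^q$ compatible with exponential integrability.
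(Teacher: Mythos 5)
Your overall skeleton (integrate by parts in the $\nabla\varphi\cdot\nabla F$ term, use the dimension-two identity $n+\Delta\varphi=e^F\,tr_{\varphi}g$, absorb, then sum the Taylor series via Stirling) is the same as the paper's, and several individual steps are fine: the Cauchy--Schwarz bound $|\nabla u|^2\le (n+\Delta\varphi)|\nabla_{\varphi}u|_{\varphi}^2$, the treatment of the $H'|\nabla\varphi|^2$ term, and the absorption of the $\Delta\varphi$-contribution into the $tr_{\varphi}g$ good term. But there is a genuine gap at the decisive step, namely the claim that ``after these absorptions and the AM--GM bound $tr_{\varphi}g\ge n e^{-\|F\|_\infty/n}$ one obtains $\int_M u^q\,dvol_{\varphi}\le (Cq)^q$.'' After your absorptions, every surviving bad term on the right-hand side is of the form $\mathrm{const}\cdot\int_M u^q\,dvol_{\varphi}$, and every surviving good term is of the same power $u^q$ (times $tr_{\varphi}g$ or $|\nabla_{\varphi}\varphi|_{\varphi}^2$); there is no additive, $u$-free quantity anywhere from which a bound like $(Cq)^q$ could emerge. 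The AM--GM bound only gives a good coefficient $\tfrac{C}{8}ne^{-\|F\|_\infty/n}$ against a bad coefficient $1+7.5C$; both scale linearly in the large constant $C$, so for large $\|F\|_\infty$ you cannot make the good side dominate by enlarging $C$, and even when you could, the resulting inequality $c\int u^q\le C'\int u^q$ carries no information. In short, the estimate does not close as described.

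The missing idea, which is the heart of the paper's argument, is a \emph{power improvement}: combine the leftover $(n+\Delta\varphi)u^{q}$-type good term (equivalently $tr_{\varphi}g\,u^q$, by the $n=2$ identity) with the other good term $2\delta e^{H}|\nabla_{\varphi}\varphi|_{\varphi}^2|\nabla\varphi|^2u^{q-1}$, using $|\nabla\varphi|^2\le (n+\Delta\varphi)|\nabla_{\varphi}\varphi|_{\varphi}^2$ and $(n+\Delta\varphi)\ge n e^{F/n}$, to produce a term $c_1\int_M e^{qH}(|\nabla\varphi|^2+K)^{q+\frac12}dvol_{\varphi}$ on the good side. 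Only with this extra half power can the remaining $\mathrm{const}\int u^q$ be beaten by Young's inequality, at the cost of an additive $\mathrm{const}^{2q}\,Vol(M)$; likewise the factor $q$ coming from the cross term is paid in the paper by Young's inequality with $K$ fixed, producing the additive term $(2q)^q\int e^{qH}(n+\Delta\varphi)dvol_{\varphi}\le \mathrm{const}^{q}(2q)^q\,Vol(M)$. These additive terms are the actual source of the $L^k k^k$ bound that Stirling converts into exponential integrability. Your alternative device of taking $K\propto q$ is not fatal in itself (the derivation of the differential inequality only needs $K>2n$), but it does not replace the power-gain step: note that $u\ge cq$ pointwise then forces $\int u^{q}\,dvol_{\varphi}\ge (cq)^q Vol_{\varphi}(M)$, so any correct closing of the estimate with your normalization must still generate $q^q$-sized additive terms through a Young-type step (e.g.\ from a leftover $K\int u^{q-1/2}$ term), which your outline never does. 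To repair the proof, insert the combination of the two good terms into $c_1\int e^{qH}(|\nabla\varphi|^2+K)^{q+\frac12}$ and then run the two Young inequalities; after that your Stirling argument is correct.
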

We simply integrate by parts for the term involving $\nabla\varphi\cdot\nabla F$:
\begin{equation}\label{1.132}
\begin{split}
-&\int_M2u^{q-1}e^H\nabla\varphi\cdot\nabla Fdvol_{\varphi}=-\int_M2u^{q-1}e^H\nabla\varphi\cdot\nabla(e^F)dvol_g\\
&=\int_M2(q-1)u^{q-2}e^H\nabla u\cdot\nabla\varphi e^Fdvol_g+\int_M2u^{q-1}e^HH'|\nabla\varphi|^2e^Fdvol_g\\
&+\int_M2u^{q-1}e^H\Delta\varphi e^Fdvol_g.
\end{split}
\end{equation}
We handle the terms on the right hand side above one by one. 
\begin{equation*}
\begin{split}
&\int_M2(q-1)u^{q-2}e^H\nabla u\cdot\nabla\varphi e^Fdvol_g\le \int_M2(q-1)u^{q-\frac{3}{2}}e^{\frac{1}{2}H}|\nabla u|dvol_{\varphi}\\
&\le \int_M2(q-1)u^{q-\frac{3}{2}}e^{\frac{1}{2}H}|\nabla_{\varphi}u|_{\varphi}(n+\Delta\varphi)^{\frac{1}{2}}dvol_{\varphi}\le \int_M\frac{q-1}{2}u^{q-2}|\nabla_{\varphi}u|_{\varphi}^2dvol_{\varphi}\\
&+\int_M2(q-1)u^{q-1}e^H(n+\Delta\varphi)dvol_{\varphi}
\end{split}
\end{equation*}
In the first inequality above, we used $e^{\frac{1}{2}H}|\nabla\varphi|\le u^{\frac{1}{2}}$, just from the definition of $u$.

For the second term of (\ref{1.132}), we have
\begin{equation*}
\int_M2u^{q-1}e^HH'|\nabla\varphi|^2e^Fdvol_g\le \int_M2u^{q}\frac{3C}{2}dvol_{\varphi}.
\end{equation*}
This is because $|H'|\le\frac{3C}{2}$, and $e^H|\nabla\varphi|^2\le u$.

Finally, for the last term in (\ref{1.132}), we find,
\begin{equation*}
\int_M2u^{q-1}e^H\Delta\varphi e^Fdol_g\le \int_M2u^{q-1}e^H(n+\Delta\varphi)dvol_{\varphi}.
\end{equation*}
In conclusion, we have
\begin{equation}
\begin{split}
-\int_M&2u^{q-1}e^H\nabla\varphi\cdot\nabla Fdvol_{\varphi}\le \int_M\frac{q-1}{2}u^{q-2}|\nabla_{\varphi}u|_{\varphi}^2dvol_{\varphi}\\
&+\int_M2qu^{q-1}e^H(n+\Delta\varphi)dvol_{\varphi}+\int_M3Cu^qdvol_{\varphi}
\end{split}
\end{equation}
Plug this back to (\ref{1.131}), we find 
\begin{equation}
\begin{split}
&\int_M\frac{q-1}{2}u^{q-2}|\nabla_{\varphi}u|_{\varphi}^2dvol_{\varphi}\le -\int_M\frac{C}{4}tr_{\varphi}gu^qdvol_{\varphi}-\int_M2\delta u^{q-1}e^H|\nabla_{\varphi}\varphi|_{\varphi}^2|\nabla\varphi|^2dvol_{\varphi}\\
&+\int_M(2q-2)u^{q-1}e^H(n+\Delta\varphi)dvol_{\varphi}+\int_M(1+7.5C)u^qdvol_{\varphi}
\end{split}
\end{equation}
Now it is time to use the assumption $n=2$. Then we can write $tr_{\varphi}g=e^{-F}(n+\Delta\varphi)$. Then we can estimate
\begin{equation*}
\begin{split}
&\int_M(2q-2)u^{q-1}e^H(n+\Delta\varphi)dvol_{\varphi}=\int_M(2q-2)e^{qH}(|\nabla\varphi|^2+K)^{q-1}(n+\Delta\varphi)dvol_{\varphi}\\
&\le\int_Me^{qH}(|\nabla\varphi|^2+K)^q(n+\Delta\varphi)dvol_{\varphi}+\int_M(2q-2)^qe^{qH}(n+\Delta\varphi)dvol_{\varphi}.
\end{split}
\end{equation*}
On the other hand, 
\begin{equation*}
\int_M\frac{C}{4}tr_{\varphi}gu^qdvol_{\varphi}=\int_M\frac{C}{4}e^{-F}e^{qH}(|\nabla\varphi|^2+K)^q(n+\Delta\varphi)dvol_{\varphi}.
\end{equation*}
Therefore, if we choose $C$ sufficiently large so that $\frac{C}{4}e^{-F}>2$, we then have 
\begin{equation*}
\begin{split}
&\int_M\frac{q-1}{2}u^{q-2}|\nabla_{\varphi}u|_{\varphi}^2dvol_{\varphi}\le -\int_Me^{qH}(n+\Delta\varphi)(|\nabla\varphi|^2+K)^qdvol_{\varphi}\\
&-\int_M2\delta u^{q-1}e^H|\nabla_{\varphi}\varphi|_{\varphi}^2|\nabla\varphi|^2dvol_{\varphi}+\int_M(1+7.5C)u^qdvol_{\varphi}\\
&+\int_M(2q)^qe^{qH}(n+\Delta\varphi)dvol_{\varphi}.
\end{split}
\end{equation*}
Now we have
\begin{equation*}
\begin{split}
-\int_M&e^{qH}\frac{1}{2}(n+\Delta\varphi)(|\nabla\varphi|^2+K)^qdvol_{\varphi}-\int_M2\delta e^{qH}|\nabla\varphi|^{2q}|\nabla_{\varphi}\varphi|_{\varphi}^2dvol_{\varphi}\\
&\le -\int_M\delta e^{qH}|\nabla\varphi|^{2q+1}dvol_{\varphi}-\int_Me^{qH}e^{\frac{F}{n}}(|\nabla\varphi|^2+K)^qdvol_{\varphi}\\
&\le -\int_Mc_1e^{qH}(|\nabla\varphi|^2+K)^{q+\frac{1}{2}}dvol_{\varphi}.
\end{split}
\end{equation*}
Here $c_1$ is a constant depending only on the $C^0$ bound of $F$ and $\varphi$, but does not depend on $q$.
Hence
\begin{equation*}
\begin{split}
\int_M&\frac{q-1}{2}u^{q-2}|\nabla_{\varphi}u|_{\varphi}^2dvol_{\varphi}+\int_Mc_1e^{qH}(|\nabla\varphi|^2+K)^{q+\frac{1}{2}}dvol_{\varphi}\\
&+\int_M\frac{e^{qH}}{2}(n+\Delta\varphi)(|\nabla\varphi|^2+K)^qdvol_{\varphi}\le \int_M(1+7.5C)u^qdvol_{\varphi}\\
&+\int_M(2q)^{q}e^{qH}(n+\Delta\varphi)dvol_{\varphi}.
\end{split}
\end{equation*}
For the last term above, we have 
\begin{equation*}
\int_M(2q)^qe^{qH}(n+\Delta\varphi)dvol_{\varphi}\le e^{q||H||_0}e^{||F||_0}(2q)^qVol(M).
\end{equation*}
Also we can estimate
\begin{equation*}
\begin{split}
\int_M&(1+7.5C)u^qdvol_{\varphi}=\int_M(1+7.5C)e^{qH}(|\nabla\varphi|^2+K)^qdvol_{\varphi}\\
&\le \int_M\frac{c_1}{2}e^{qH}(|\nabla\varphi|^2+K)^{q+\frac{1}{2}}dvol_{\varphi}+(1+7.5C)^{2q+1}\big(\frac{2}{c_1}\big)^{2q}Vol(M).
\end{split}
\end{equation*}
Hence
\begin{equation*}
\begin{split}
\int_M&\frac{c_1}{2}e^{qH}(|\nabla\varphi|^2+K)^{q+\frac{1}{2}}dvol_{\varphi}\le e^{q||H||_0}e^{||F||_0}(2q)^qVol(M)\\
&+(1+7.5C)^{2q+1}\big(\frac{2}{c_1}\big)^{2q}Vol(M).
\end{split}
\end{equation*}
Now if we put $k=q+\frac{1}{2}$, with $k$ an integer $\ge 2$, we then find that for some $L>1$, $c_2>0$ sufficiently large, we have
\begin{equation*}
\int_M(|\nabla\varphi|^2+K)^kdvol_{\varphi}\le c_2L^kk^k.
\end{equation*}
Now recall the Sterling formula $k^k\approx k!e^k\sqrt{2\pi k}$, we see that we have an integral bound for $\int_M\exp(\eps (|\nabla\varphi|^2+K))dvol_{\varphi}$ for some small $\eps>0$.

\subsection{General dimension}
If the dimension is higher than 2, then we can no longer control the term involving $\Delta\varphi$ in terms of $tr_{\varphi}g$ and we will need to use the continuity of $F$.
The idea is to decompose $F$ into two parts, namely $F=F_g+F_b$(``good" plus ``bad"). Here $\nabla F_g$ is bounded and $F_b$ has small $C^0$ bound. For later use, we need a lemma which shows quantitatively how the gradient bound for $F_g$ and smallness of $F_b$ relate to the continuity of $F$.

\begin{defn}
Let $\omega:[0,1)\rightarrow\mathbb{R}^+$ be a continuous and strictly increasing function with $\omega(0)=0$, such that $|F(x)-F(y)|\le \omega(d_{\omega_0}(x,y))$, for any $x,\,y\in M$ and $d_{\omega_0}(x,y)\le 1$, then we say $\omega(r)$ is a modulus of continuity for the function $F$. Here $d_{\omega_0}(x,y)$ is the distance function defined by the metric $\omega_0$.
\end{defn}
Now we formulate the approximation lemma in terms of the modulus of continuity defined above.
\begin{lem}
Let $F$ be a continuous function on $M$ and let $\omega(r)$ be a modulus of continuity for $F$. Then there exists a constant $C_{2.2}>0$, depending only on the background metric $\omega_0$ , the manifold $M$, and absolute bound for $F$, such that for any $0<r<1$, there exists a function $F_r$ which satisfies the estimates: 
\begin{equation*}
|\nabla F_r|\le\frac{C_{2.2}}{r},\,\,|F-F_r|\le C_{2.2}(\omega(r)+r).
\end{equation*}
\end{lem}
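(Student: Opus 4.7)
The natural approach is mollification in local coordinates combined with a partition of unity. Fix a finite cover of $M$ by coordinate charts $\phi_i: U_i\to V_i\subset\mathbb{R}^{2n}$ in which the K\"ahler metric $\omega_0$ is uniformly equivalent to the flat Euclidean metric, so that manifold distance and chart distance are bi-Lipschitz equivalent with a universal factor $\Lambda>0$. Let $\{\chi_i\}$ be a smooth partition of unity with $\chi_i\in C_c^\infty(U_i)$ and $\sum_i \chi_i\equiv 1$, and let $\eta\in C_c^\infty(\mathbb{R}^{2n})$ be a standard nonnegative mollifier with $\int\eta = 1$ and $\text{supp}(\eta)\subset B_1(0)$; write $\eta_r(z) := r^{-2n}\eta(z/r)$. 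For $r$ smaller than a fixed constant depending only on the cover, define
\[
F_i^r(x) := \int \eta_r(\phi_i(x) - z)\,F(\phi_i^{-1}(z))\,dz \quad\text{on } U_i, \qquad F_r := \sum_i \chi_i\,F_i^r.
\]

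For the $C^0$ estimate, using $\int\eta_r = 1$ I would write $F(x) - F_i^r(x) = \int\eta_r(\phi_i(x) - z)(F(x) - F(\phi_i^{-1}(z)))\,dz$. The integrand is supported where $|\phi_i(x) - z|\le r$, so the corresponding manifold distance is $\le\Lambda r$, giving $|F - F_i^r|\le\omega(\Lambda r)$. Passing to the concave envelope of $\omega$ (or crudely bounding $\omega(\Lambda r)-\omega(r)$ against $2\|F\|_\infty$) yields $\omega(\Lambda r)\le C(\omega(r)+r)$, and then $\sum_i\chi_i\equiv 1$ gives $|F - F_r|\le C(\omega(r)+r)$. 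For the gradient estimate I would exploit $\sum_i \nabla\chi_i\equiv 0$ to rewrite
\[
\nabla F_r = \sum_i \chi_i\,\nabla F_i^r + \sum_i (F_i^r - F)\,\nabla\chi_i.
\]
In each chart, $\nabla F_i^r(x) = \int(F(\phi_i^{-1}(z)) - F(x))\,\nabla_x\eta_r(\phi_i(x) - z)\,dz$, so $|\nabla F_i^r|\le \omega(\Lambda r)\,\|\nabla\eta_r\|_{L^1} \le C\omega(\Lambda r)/r$; since any modulus of continuity of a bounded function is itself bounded by $2\|F\|_\infty$, this is at most $C\|F\|_\infty/r$. The partition-of-unity error term is $O(\omega(r))=O(1)$, which is absorbed into $C/r$ for $r<1$.

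The whole argument is essentially routine; the only mild obstacle is bookkeeping between Euclidean chart distance and Riemannian manifold distance, which is handled cleanly by the bi-Lipschitz equivalence above. The slack $+r$ in the $C^0$ bound is precisely what lets us absorb the chart distortion factor $\Lambda$ without requiring any quantitative regularity of $\omega$ beyond continuity, so the resulting constant $C_{2.2}$ depends only on $\omega_0$, $M$, and $\|F\|_\infty$, as asserted.
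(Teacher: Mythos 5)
Your construction is essentially the paper's: cover $M$ by finitely many coordinate balls, take a partition of unity, mollify at scale $r$, and use the standard convolution bounds (gradient of the mollification $\le C\|F\|_\infty/r$, $C^0$-closeness governed by the modulus of continuity). The only structural difference is cosmetic: the paper convolves the products $\zeta_i F$ and sums, so its $+r$ error comes from the Lipschitz constant of the cutoffs, whereas you convolve $F$ chartwise and glue with $\chi_i$, so your error comes from the chart distortion $\Lambda$.

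The one step that does not stand as written is the passage from $\omega(\Lambda r)$ to $C(\omega(r)+r)$. The ``crude'' justification is a non sequitur: $\omega(\Lambda r)\le \omega(r)+2\|F\|_\infty$ only bounds the $C^0$ error by a fixed constant, not by $C(\omega(r)+r)$, and the whole point of the lemma is that the error tends to $0$ with $r$. The concave-envelope route is also not automatic for the \emph{given} $\omega$: the lemma's $\omega$ is an arbitrary admissible majorant, not necessarily subadditive or comparable to its least concave majorant, and one can choose $\omega$ (e.g.\ equal to $\eps t$ up to a small scale $b_0$ and then rising quickly to $2\|F\|_\infty$) so that for $r$ with $r<b_0\le \Lambda r$ one has $\omega(\Lambda r)\gg C(\omega(r)+r)$ with $C$ depending only on $\omega_0$, $M$, $\|F\|_\infty$; so, as an inequality about $\omega$ itself, the claim is false. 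The fix is easy and standard: bound the actual oscillation of $F$ rather than $\omega$. Since $M$ is compact, any two points with $d(x,y)\le \Lambda r$ are joined by a minimizing geodesic, which you subdivide into $\lceil\Lambda\rceil$ arcs of length $\le r\le 1$, giving $|F(x)-F(y)|\le\lceil\Lambda\rceil\,\omega(r)$; alternatively, rescale each chart so that $g_{i\bar j}\le\delta_{i\bar j}$ there, whence manifold distance is dominated by chart distance and no factor $\Lambda$ appears at all. With either replacement your $C^0$ and gradient estimates go through verbatim (indeed you then do not even need the $+r$ slack, which in the paper's variant is what absorbs the cutoff Lipschitz constants), and the constant has the stated dependence.
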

\begin{proof}
The proof of this lemma should be quite standard, but we were unable to find an exact reference. First we can cover the manifold by finitely many coordinate balls $\{B(p_i,2)\}_{i=1}^N$, such that $\cup_{i=1}^NB(p_i,1)$ covers $M$. Let $\{\zeta_i\}_{i=1}^N$ be a partition of unity for the open cover $B(p_i,1)$, namely $\zeta_i$ has compact support in $B(p_i,1)$, nonnegative, and $\sum_{i=1}^N\zeta_i=1$. For each $i$, $F\zeta_i$ has compact support in $B(p_i,1)$ and for $0<r<1$ we define:
\begin{equation*}
F_{i,r}=(F\zeta_i)*\rho_r,\,\,\,F_r=\sum_{i=1}^NF_{i,r}.
\end{equation*}
Here $\rho_r=\rho(\frac{\cdot}{r})$ and $\rho$ is the standard smoothing kernel supported in the unit ball of $\mathbb{C}^n$. Note that each $F_{i,r}$ has compact support in $B(p_i,2)$, hence can be extended smoothly by 0 to $M$. From the standard properties of convolution, we know that for each $i$,
\begin{equation*}
|\nabla F_{i,r}|\le ||F\zeta_i||_0\frac{1}{r}\le\frac{||F||_0}{r},\,\,|F_{i,r}-F\zeta_i|\le \omega_{F\zeta_i}(r).
\end{equation*}
Here $\omega_{F\zeta_i}(r)$ is a modulus of continuity for the function $F\zeta_i$ and it is clear that 
\begin{equation*}
\omega_{F\zeta_i}(r)\le \omega(r)+C_{2.15}||F||_0r.
\end{equation*}
Here the constant $C_{2.15}$ is related to the gradient bound for the cut-off function $\zeta_i$.
Therefore, we have that:
\begin{equation*}
\begin{split}
&|\nabla F|\le \sum_{i=1}^N|\nabla F_{i,r}|\le \frac{N||F||_0}{r},\\
&|F-F_r|\le \sum_{i=1}^N|F\zeta_i-F_{i,r}|\le \sum_{i=1}^N\omega_{F\zeta_i}(r)\le N\omega(r)+NC_{2.15}||F||_0r.
\end{split}
\end{equation*}
\end{proof}
Now we go back to (\ref{1.131}), and we need to use the continuity of $F$ to handle the term involving $\nabla F$. Let $0<r<1$, and $F_r$ be the function given by the above lemma, then
\begin{equation}\label{1.135}
\begin{split}
&\int_M-2u^{q-1}e^H\nabla\varphi\cdot\nabla Fdvol_{\varphi}=-\int_M2u^{q-1}e^H\nabla\varphi\cdot \nabla (e^F-e^{F_r})dvol_g\\
&-\int_M2u^{q-1}e^H\nabla\varphi\cdot\nabla (e^{F_r})dvol_g.
\end{split}
\end{equation}
The handling of the second term is easy. Indeed, choosing $0<r<1$, we then have
\begin{equation}\label{1.136n}\begin{split}
-&\int_M2u^{q-1}e^H\nabla\varphi\cdot\nabla(e^{F_r})dvol_g=-\int_M2u^{q-1}e^H\nabla\varphi\cdot\nabla F_re^{F_r-F}dvol_{\varphi}\\
&\le \int_M2u^{q-1}e^H|\nabla\varphi|\frac{c_3}{r}e^{\omega(r)}dvol_{\varphi}\le \frac{2c_3e}{r}\int_Me^{\frac{1}{2}H}u^{q-\frac{1}{2}}dvol_{\varphi}.
\end{split}
\end{equation}
Here and in the following, the constants $c_i$ will have the dependence as described in point (ii) of the Theorem \ref{THM} and not on $q$. For the first term in (\ref{1.135}), we integrate by parts in the same way as before.
\begin{equation}\label{1.136}
\begin{split}
-&\int_M2u^{q-1}e^H\nabla\varphi\cdot\nabla(e^F-e^{F_r})dvol_g=\int_M2u^{q-1}e^H\Delta\varphi(e^F-e^{F_r})dvol_g\\
&+\int_M2(q-1)u^{q-2}e^H\nabla u\cdot\nabla\varphi(e^F-e^{F_r})dvol_g+\int_M2u^{q-1}e^HH'|\nabla\varphi|^2(e^F-e^{F_r})dvol_g.
\end{split}
\end{equation}
There is no loss of generality to assume that $\omega(r)\ge r$. Hence for the first term on the right hand side of (\ref{1.136}),
\begin{equation}\label{2.14}
\begin{split}
-&\int_M2u^{q-1}e^H\Delta\varphi(e^F-e^{F_r})dvol_g=-\int_M2u^{q-1}e^H\Delta\varphi(1-e^{F_r-F})dvol_{\varphi}\\
&\le \int_M2u^{q-1}e^Hc_4\omega(r)(n+\Delta\varphi)dvol_{\varphi}+\int_M2nc_5\omega(r)u^{q-1}e^Hdvol_{\varphi}
\end{split}
\end{equation}

Now for the second term of (\ref{1.136}), we have
\begin{equation}\label{2.15}
\begin{split}
&\int_M2(q-1)u^{q-2}e^H\nabla u\cdot\nabla\varphi(e^F-e^{F_r})dvol_g=\int_M2(q-1)u^{q-2}e^H\nabla u\cdot\nabla\varphi(1-e^{F_r-F})dvol_{\varphi}\\
&\le \int_M2(q-1)u^{q-2}|\nabla u|e^H|\nabla\varphi|c_4\omega(r)dvol_{\varphi}\le \int_M2(q-1)u^{q-\frac{3}{2}}|\nabla u|e^{\frac{1}{2}H}c_4\omega(r)dvol_{\varphi}\\
&\le \int_M2(q-1)u^{q-\frac{3}{2}}|\nabla_{\varphi}u|_{\varphi}(n+\Delta\varphi)^{\frac{1}{2}}e^{\frac{1}{2}H}c_4\omega(r)dvol_{\varphi}\le \int_M\frac{q-1}{2}u^{q-2}|\nabla_{\varphi}u|_{\varphi}^2dvol_{\varphi}\\
&+\int_M2(q-1)u^{q-1}e^H(n+\Delta\varphi)c_4^2\omega^2(r)dvol_{\varphi}.
\end{split}
\end{equation}
For the last term in (\ref{1.136}), we see that
\begin{equation}\label{2.16}
\int_M2u^{q-1}e^HH'|\nabla\varphi|^2(e^F-e^{F_r})dvol_g\le \int_M2u^{q}\frac{3C}{2}c_5\omega(r)dvol_{\varphi}.
\end{equation}
Combining above calculations (\ref{2.14})-(\ref{2.16}), we obtain from (\ref{1.136})
\begin{equation}
\begin{split}
-&\int_M2u^{q-1}e^H\nabla\varphi\cdot\nabla(e^F-e^{F_r})dvol_g\le \int_M\frac{q-1}{2}u^{q-2}|\nabla_{\varphi}u|_{\varphi}^2dvol_{\varphi}\\
&+ \int_Mu^{q-1}e^H(n+\Delta\varphi)(2c_4\omega(r)+2(q-1)c_4^2\omega^2(r))dvol_{\varphi}\\
&+\int_M3C\omega(r)u^qdvol_{\varphi}+\int_M2nc_5\omega(r)u^{q-1}e^Hdvol_{\varphi}.
\end{split}
\end{equation}
Combining this with (\ref{1.135}) and (\ref{1.136n}), we obtain that for any $0<r<1$, 
\begin{equation}\label{1.139}
\begin{split}
&-\int_M2u^{q-1}e^H\nabla\varphi\cdot\nabla Fdvol_{\varphi}\le \int_M\frac{q-1}{2}u^{q-2}|\nabla_{\varphi}u|_{\varphi}^2dvol_{\varphi}\\
&+\int_Mu^{q-1}e^H(n+\Delta\varphi)\big(2c_4\omega(r)+2(q-1)c_4^2\omega^2(r)\big)dvol_{\varphi}+\int_M3C\omega(r)u^qdvol_{\varphi}\\
&+\int_M2nc_5\omega(r)u^{q-1}e^Hdvol_{\varphi}+\frac{2c_3e}{r}\int_Me^{\frac{1}{2}H}u^{q-\frac{1}{2}}dvol_{\varphi}.
\end{split}
\end{equation}
Plug (\ref{1.139}) back to (\ref{1.131}), we obtain that 
\begin{equation}\label{1.140}
\begin{split}
&\int_M\frac{q-1}{2}u^{q-2}|\nabla_{\varphi}u|_{\varphi}^2dvol_{\varphi}+\int_M\frac{C}{4}u^qtr_{\varphi}g dvol_{\varphi}+\int_Mu^{q-1}2\delta e^H|\nabla_{\varphi}\varphi|_{\varphi}^2|\nabla\varphi|^2dvol_{\varphi}\\
&+\int_Mu^{q-1}e^H(n+\Delta\varphi)\big(1-2c_4\omega(r)-2(q-1)c_4^2\omega^2(r)\big)dvol_{\varphi}\\
&+\int_Mu^{q-1}e^H\frac{C^2K}{4}|\nabla_{\varphi}\varphi|_{\varphi}^2dvol_{\varphi}\le\int_M(1+7.5C)u^qdvol_{\varphi}+\int_M2nc_5\omega(r)u^{q-1}e^Hdvol_{\varphi}\\
&+\frac{2c_3e}{r}\int_Me^{\frac{1}{2}H}u^{q-\frac{1}{2}}dvol_{\varphi}.
\end{split}
\end{equation}
Now we observe that in the above,
\begin{equation}\label{2.20}
\begin{split}
\int_M&u^{q-1}2\delta e^H|\nabla_{\varphi}\varphi|_{\varphi}^2|\nabla\varphi|^2dvol_{\varphi}+\int_Mu^{q-1}e^H\frac{C^2K}{4}|\nabla_{\varphi}\varphi|_{\varphi}^2dvol_{\varphi}\\
&\ge \int_Mc_6u^q|\nabla_{\varphi}\varphi
|_{\varphi}^2dvol_{\varphi}.
\end{split}
\end{equation}
Also
\begin{equation*}
tr_{\varphi}g+|\nabla_{\varphi}\varphi|_{\varphi}^2\ge e^{-\frac{F}{n-1}}(n+\Delta\varphi)^{\frac{1}{n-1}}+|\nabla_{\varphi}\varphi|_{\varphi}^2\ge \frac{1}{2}e^{-\frac{F}{n}}(|\nabla\varphi|^{\frac{2}{n}}+1).
\end{equation*}
Combining the right hand side of (\ref{2.20}) with the term in (\ref{1.140}) involving $tr_{\varphi}g$, we can conclude that 
\begin{equation}\label{1.141}
\begin{split}
\int_M&\frac{q-1}{2}u^{q-2}|\nabla_{\varphi}u|_{\varphi}^2dvol_{\varphi}+\int_Mc_7u^{q+\frac{1}{n}}dvol_{\varphi}+\int_Mu^{q-1}e^H(n+\Delta\varphi)A_q(r)dvol_{\varphi}\\
&\le \int_M(1+7.5C)u^qdvol_{\varphi}+\int_M2nc_5\omega(r)u^{q-1}e^Hdvol_{\varphi}+\frac{2c_3e}{r}\int_Me^{\frac{1}{2}H}u^{q-\frac{1}{2}}dvol_{\varphi}.
\end{split}
\end{equation}
In the above
$$
A_q(r)=1-2c_4\omega(r)-
2(q-1)c_4^2\omega^2(r).
$$
For any fixed $q>1$, we can choose $r$ sufficiently small, so that $A_q(r)>0$. Then we conclude from (\ref{1.141}) that $|\nabla\varphi|^2\in L^{q+\frac{1}{n}}$.

\section{Estimate of the gradient in $L^{\infty}$}
In order to get a bound for $\nabla\varphi$ in $L^{\infty}$, we need to estimate more carefully how the right hand side 
of (\ref{1.141}) grows as $q$ increases.
By doing approximation, we can assume without loss of generality that $\omega:\mathbb{R}_+\rightarrow\mathbb{R}_+$ satisfies $\omega(0)=0$, $\omega(r)\in C^1((0,1])$ and $\omega'(r)>0$.
Define for any $q\ge9$,
\begin{equation}\label{1.142n}
r_q=\sup\{0<r\le 1:c_4\omega(r)\le \frac{1}{\sqrt{8(q-1)}}\}.
\end{equation}
It is easy to verify that for any $r\le r_q$, we have $2c_4\omega(r)\le\frac{1}{4}$, $2(q-1)c_4^2\omega^2(r)\le\frac{1}{4}$. Hence $A_q(r)\ge\frac{1}{2}$.
Choosing $r=r_q$ in (\ref{1.141}), we find that
\begin{equation}\label{1.142}
\begin{split}
&\int_M\frac{q-1}{2}u^{q-2}|\nabla_{\varphi}u|_{\varphi}^2dvol_{\varphi}+\int_Mc_7u^{q+\frac{1}{n}}dvol_{\varphi}+\int_M\frac{1}{2}u^{q-1}e^H(n+\Delta\varphi)dvol_{\varphi}\\
&\le \int_M(1+7.5C)u^qdvol_{\varphi}+\int_M\frac{2nc_5u^{q-1}}{c_4\sqrt{8(q-1)}}e^Hdvol_{\varphi}+\frac{2c_3e}{r_q}\int_Me^{\frac{1}{2}H}u^{q-\frac{1}{2}}dvol_{\varphi}\\
&\le \int_M(1+7.5C)u^qdvol_{\varphi}+\int_M\frac{c_8}{r_q}e^{\frac{1}{2}H}u^{q-\frac{1}{2}}dvol_{\varphi}.
\end{split}
\end{equation}
Again $c_8$ depends on $C^0$ bound of $F$ and $\varphi$ but not on $q$.

On the other hand, if we look at the left hand side of (\ref{1.142}), we see
\begin{equation}\label{3.3n}
\begin{split}
\int_M&\frac{q-1}{2}u^{q-2}|\nabla_{\varphi}u|_{\varphi}^2dvol_{\varphi}+\int_M\frac{1}{2}u^{q-1}e^H(n+\Delta\varphi)dvol_{\varphi}\\
&\ge\int_M\sqrt{q-1}u^{q-\frac{3}{2}}e^{\frac{1}{2}H}|\nabla u|dvol_{\varphi}\ge c_8\int_M\sqrt{q-1}u^{q-\frac{3}{2}}|\nabla u|dvol_g.
\end{split}
\end{equation}
Also we have
\begin{equation}\label{3.4n}
\int_M(1+7.5C)u^qdvol_{\varphi}\le \int_M\frac{c_7}{2}u^{q+\frac{1}{n}}dvol_{\varphi}+\int_Mc_9u^{q-\frac{1}{2}}dvol_{\varphi}.
\end{equation}
Therefore
\begin{equation}\label{3.5n}
c_8\int_M\sqrt{q-1}u^{q-\frac{3}{2}}|\nabla u|dvol_g\le \int_M\big(c_9+\frac{c_8}{r_q}\big)dvol_{\varphi}.
\end{equation}
Hence, using $r_q\le 1$, we conclude by combining (\ref{1.142})-(\ref{3.5n}):
\begin{equation}\label{1.143}
\int_M\sqrt{q-1}u^{q-\frac{3}{2}}|\nabla u|dvol_g\le \int_M\frac{c_{10}}{r_q}u^{q-\frac{1}{2}}dvol_g.
\end{equation}
The constants $c_7$, $c_8$, $c_9$ and $c_{10}$ above depnd only on the $C^0$ bound of $F$ and $\varphi$, but not on $q$.
Putting $\chi=\frac{2n}{2n-1}$, we observe that by Sobolev imbedding, we have
\begin{equation*}
\bigg(\int_Mu^{(q-\frac{1}{2})\chi}dvol_g\bigg)^{\frac{1}{\chi}}\le c_{11}\bigg(\int_M|\nabla(u^{q-\frac{1}{2}})|dvol_g+\int_Mu^{q-\frac{1}{2}}dvol_g\bigg).
\end{equation*}
Hence we conclude from (\ref{1.143}) that for $q\ge 9$,
\begin{equation*}
\begin{split}
\bigg(&\int_Mu^{(q-\frac{1}{2})\chi}dvol_g\bigg)^{\frac{1}{\chi}}\le \big(\frac{c_{11}c_{10}\sqrt{q-\frac{1}{2}}}{r_q}+c_{11}\big)\int_Mu^{q-\frac{1}{2}}dvol_g\\
&\le \frac{c_{12}\sqrt{q}}{r_q}\int_Mu^{q-\frac{1}{2}}dvol_g.
\end{split}
\end{equation*}
Put $q-\frac{1}{2}=\chi^k$, we see that for $k\ge k_0$ (here $k_0$ is the smallest integer for which $\chi^{k_0}-\frac{1}{2}\ge 9$),
\begin{equation*}
||u||_{L^{\chi^{k+1}}}\le c_{12}^{\frac{1}{\chi^k}}(2\chi^k)^{\frac{1}{\chi^k}}(r_{\chi^k+\frac{1}{2}})^{\frac{1}{\chi^k}}||u||_{L^{\chi^k}}.
\end{equation*}
In order to show $L^{\infty}$ bound of $u$, we just need
\begin{equation*}
\sum_{k=k_0}^{\infty}\log\big(c_{12}^{\frac{1}{\chi^k}}(2\chi^k)^{\frac{1}{\chi^k}}(r^{-1}_{\chi^k+\frac{1}{2}})^{\frac{1}{\chi^k}}\big)<\infty.
\end{equation*}
Above is easily seen to be equivalent to showing:
\begin{equation}\label{1.145}
\sum_{k=k_0}^{\infty}\frac{1}{\chi^k}\log(r_{\chi^k+\frac{1}{2}}^{-1})<\infty.
\end{equation}
Here $r_q$ is defined by (\ref{1.142n}). We show that this is guaranteed by the condition: $\int_0^1\frac{\omega(r)^2}{r}dr<\infty$. This follows from the following elementary lemma:
\begin{lem}
Let $\omega(r):[0,1]\rightarrow \mathbb{R}^+$ be a continuous and strictly increasing function such that $\omega(0)=0$. Define $r_q$ as given by (\ref{1.142n}). Let $\chi>1$, then there exists $C_{3.3}$, which depends only on $\chi$, such that
\begin{equation*}
\sum_{k=k_0}^{\infty}\frac{1}{\chi^k}\log(r_{\chi^k+\frac{1}{2}}^{-1})\le C_{3.3}\int_0^1\frac{\omega(r)^2}{r}dr.
\end{equation*}
Here $k_0$ is the minimal integer for which $\chi^{k_0}>9+\frac{1}{2}$.
\end{lem}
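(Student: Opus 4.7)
The strategy is to introduce the shorthand $s_k := r_{\chi^k + 1/2}$ and convert the discrete sum on the left-hand side into a Riemann-sum lower bound for the integral on the right-hand side. From the definition \eqref{1.142n}, continuity and strict monotonicity of $\omega$ give $c_4\omega(s_k)=\frac{1}{\sqrt{8(\chi^k-1/2)}}$ for all $k\geq k_0$, so that $\omega(s_k)^2$ and $\chi^{-k}$ are comparable up to a factor depending only on $c_4$, and $\{s_k\}$ is strictly decreasing with $s_k\to 0$.

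The next step is to resolve $\log(s_k^{-1})$ into a telescoping sum. Writing $\log(s_k^{-1})=\log(s_{k_0}^{-1})+\sum_{j=k_0}^{k-1}\log(s_j/s_{j+1})$ and interchanging the order of summation yields
\begin{equation*}
\sum_{k\geq k_0}\frac{\log(s_k^{-1})}{\chi^k}=\frac{\chi^{-k_0}}{1-\chi^{-1}}\log(s_{k_0}^{-1})+\frac{1}{\chi-1}\sum_{j\geq k_0}\chi^{-j}\log(s_j/s_{j+1}).
\end{equation*}
So the task reduces to controlling the two pieces on the right by $\int_0^1 \omega(r)^2/r\,dr$.

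For the second (main) piece, I would use monotonicity of $\omega$ on the interval $[s_{j+1},s_j]$: since $\omega(r)\geq\omega(s_{j+1})$ there, we get
\begin{equation*}
\int_{s_{j+1}}^{s_j}\frac{\omega(r)^2}{r}\,dr\;\geq\;\omega(s_{j+1})^2\log(s_j/s_{j+1})\;\geq\;\frac{1}{8c_4^2\chi^{j+1}}\log(s_j/s_{j+1}),
\end{equation*}
and summing in $j$ produces exactly the sum $\sum_j\chi^{-j}\log(s_j/s_{j+1})$ bounded by a constant times $\int_0^1\omega^2/r\,dr$. For the boundary term $\chi^{-k_0}\log(s_{k_0}^{-1})$ I would run the same argument on the interval $[s_{k_0},1]$, obtaining $\omega(s_{k_0})^2\log(s_{k_0}^{-1})\leq\int_{s_{k_0}}^1\omega(r)^2/r\,dr$, and then use the comparability $\omega(s_{k_0})^{-2}\leq 8c_4^2\chi^{k_0}$ to absorb the $\chi^{-k_0}$ factor.

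The only genuine subtlety is the boundary term: a naive telescoping could leave an uncontrolled $\log(s_{k_0}^{-1})$, but because $\omega$ is increasing and $s_{k_0}$ is the largest element of the sequence, the integral on $[s_{k_0},1]$ is bounded below by a constant multiple of $\chi^{-k_0}\log(s_{k_0}^{-1})$, exactly what is needed. Combining both bounds gives the desired inequality with a constant $C_{3.3}$ depending only on $\chi$ (and on the fixed geometric constant $c_4$, which is absorbed in the implicit dependence).
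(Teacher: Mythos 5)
Your proof is correct, but it takes a genuinely different route from the paper. The paper first converts the hypothesis by the change of variables $s=\omega(r)$ together with an integration by parts, showing $\int_0^{\omega(1)} s\log\big(1/\omega^{-1}(s)\big)\,ds\le \tfrac12\int_0^1\omega(r)^2 r^{-1}\,dr$, and then bounds the series from above by a geometric Riemann-sum comparison in the $s$-variable, using $r_{\chi^k+1/2}=\omega^{-1}\big(c_4^{-1}(8(\chi^k-\tfrac12))^{-1/2}\big)\ge \omega^{-1}\big(\tfrac{1}{4c_4}\chi^{-k/2}\big)$ and the monotonicity of $s\mapsto\log(1/\omega^{-1}(s))$ on the blocks $[\tfrac{1}{4c_4}\chi^{-(k+1)/2},\tfrac{1}{4c_4}\chi^{-k/2}]$. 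You instead perform a summation by parts on the series itself (telescoping $\log(s_k^{-1})$ and swapping the order of summation, which is legitimate by nonnegativity since $s_j\ge s_{j+1}$), and then compare each term directly with $\int_{s_{j+1}}^{s_j}\omega(r)^2 r^{-1}\,dr$ in the original $r$-variable, using only that $\omega$ is increasing and the defining relation $c_4\omega(s_j)=(8(\chi^j-\tfrac12))^{-1/2}$ (when $s_j<1$; when $s_j=1$ the corresponding logarithms vanish, so the edge case is harmless — the paper glosses over the same point by writing $r_q=\omega^{-1}(\cdots)$). Your treatment of the boundary term $\chi^{-k_0}\log(s_{k_0}^{-1})$ via the integral over $[s_{k_0},1]$ is exactly the extra step needed to make the telescoping close, and you handle it correctly. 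A modest advantage of your argument is that it never uses $\omega^{-1}$ or any differentiability of $\omega$, so it does not need the paper's preliminary smoothing of the modulus of continuity (the paper's integration by parts tacitly uses $\omega'$), and it proves the lemma under exactly the stated hypotheses. As in the paper, the constant you obtain also depends on the fixed constant $c_4$ entering the definition (\ref{1.142n}), not on $\chi$ alone; this discrepancy with the lemma's wording is inherited from the paper (whose displayed constant drops a $c_4^{-2}$ factor through an apparent typo) and is immaterial since $c_4$ depends only on the data already allowed in Theorem \ref{THM}.
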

\begin{proof}
First we observe that $\int_0^1\frac{\omega^2(r)}{r}dr<\infty$ implies that \begin{equation}\label{1.146}
\int_0^{\omega(1)}s\log\big(\frac{1}{\omega^{-1}(s)}\big)ds<\infty.
\end{equation}
Here $\omega^{-1}(s)$ is the inverse function of $\omega(r)$ on the interval $r\in[0,1]$. 

To see (\ref{1.146}) holds, we can calculate:
\begin{equation*}
\begin{split}
\int_0^{\omega(1)}&s\log\big(\frac{1}{\omega^{-1}(s)}\big)ds=\int_0^{1}\log(r^{-1})\omega(r)\omega'(r)dr=\frac{\omega(r)^2}{2}\log(r^{-1})|_{r=1}-\frac{\omega(r)^2}{2}\log(r^{-1})|_{r=0^+}\\
&+\int_0^{1}\frac{\omega(r)^2}{2r}dr\le \int_0^1\frac{\omega(r)^2}{2r}dr.
\end{split}
\end{equation*}
It only remains to show that (\ref{1.146}) implies (\ref{1.145}). First we observe that from the definition of $r_q$, we have $r_q=\omega^{-1}(c_4^{-1}(8(q-1))^{-\frac{1}{2}})$, hence
\begin{equation}\label{1.147}
r_{\chi^k+\frac{1}{2}}=\omega^{-1}\big(\frac{1}{c_4\sqrt{8(\chi^k-\frac{1}{2})}}\big)\ge \omega^{-1}\big(\frac{1}{4c_4\chi^{\frac{k}{2}}}\big).
\end{equation}
There is no loss of generality to assume $\omega(r)\ge r$(by changing $\omega(r)$ to $\omega(r)+r$), then we have $\omega(1)\ge 1$ and
\begin{equation*}
\begin{split}
\int_0^1&s\log\big(\frac{1}{\omega^{-1}(s)}\big)ds\ge \sum_{k=k_0+1}^{\infty}\int_{\frac{1}{4c_4}\chi^{-\frac{k+1}{2}}}^{\frac{1}{4c_4}\chi^{\frac{-k}{2}}}s\log\big(\frac{1}{\omega^{-1}(s)}\big)ds\\
&\ge \sum_{k=k_0}^{\infty}\big(\frac{1}{4c_4}\chi^{\frac{-k}{2}}-\frac{1}{4c_4}\chi^{-\frac{k+1}{2}}\big)(4c_4\chi^{-\frac{k+1}{2}})\log\big(\frac{1}{\omega^{-1}(\frac{1}{4c_4}\chi^{-\frac{k}{2}})}\big)\\
&\ge\chi^{-\frac{1}{2}}(1-\chi^{-\frac{1}{2}})\sum_{k=k_0}^{\infty}\chi^{-k}\log(r_{\chi^k+\frac{1}{2}}^{-1}).
\end{split}
\end{equation*}
The last inequality used (\ref{1.147}).
\end{proof}

\section{Integrability estimate for $e^{-\varphi}$}
In this section, we will show the following result:
\begin{prop}\label{p4.1}
Let $\varphi$ solves $(\omega_0+\sqrt{-1}\partial\bar{\partial}
\varphi)^n
=e^F\omega_0^n$, with normalization $\sup_M\varphi=0$. Suppose that there exists a function $\Phi:[0,\infty)\rightarrow\mathbb{R}_+$, with $\Phi(t)\rightarrow+\infty$ as $t\rightarrow+\infty$ and $e^F\Phi(e^F)\in L^1(\omega_0^n)$, then for any $p<\infty$, we have $\int_Me^{-p\varphi}\omega_0^n\le C$. Here the constant $C$ depends on $p$, the background metric $\omega_0$, the function $\Phi$ and the integral bound $\int_Me^F\Phi(e^F)\omega_0^n$.
\end{prop}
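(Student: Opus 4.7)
The plan is to show super-exponential decay of the sublevel sets $\Omega_s := \{\varphi < -s\}$ in $\omega_0^n$-measure. Normalizing so that $\sup_M\varphi = 0$, the layer-cake identity
\[
\int_M e^{-p\varphi}\omega_0^n = 1 + p\int_0^\infty e^{ps}\,|\Omega_s|_{\omega_0^n}\,ds
\]
reduces the proposition to proving $|\Omega_s|_{\omega_0^n}\le C_p e^{-ps}$ for every $p<\infty$. The main tools I plan to use are Ko\l odziej's comparison principle for the Bedford--Taylor capacity $\mathrm{Cap}_{\omega_0}$,
\[
t^n\,\mathrm{Cap}_{\omega_0}(\Omega_{s+t}) \le \int_{\Omega_s}\omega_\varphi^n = \int_{\Omega_s}e^F\omega_0^n \qquad(s>0,\ 0<t\le 1),
\]
together with the standard volume--capacity inequality
\[
|E|_{\omega_0^n}\le A_0\exp\!\bigl(-\alpha_0\,\mathrm{Cap}_{\omega_0}(E)^{-1/n}\bigr),
\]
which converts capacity decay for $\Omega_s$ into super-polynomial volume decay.

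The hypothesis $e^F\Phi(e^F)\in L^1(\omega_0^n)$ enters through a Markov-type level splitting of the Monge--Amp\`ere density: for any threshold $T>0$,
\[
\int_{\Omega_s}e^F\omega_0^n \;\le\; T\,|\Omega_s|_{\omega_0^n} + \frac{1}{\Phi(T)}\int_M e^F\Phi(e^F)\,\omega_0^n.
\]
Starting from a weak initial bound such as $|\Omega_s|_{\omega_0^n}\to 0$ (which follows, for example, from Tian's $\alpha$-invariant $\int_M e^{\alpha_0(-\varphi)}\omega_0^n \le C$, or more crudely from $\varphi\in L^1(\omega_0^n)$ via Chebyshev), I would pick $T = T(s)\to\infty$ balancing the two terms on the right. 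This yields a first quantitative decay for $\int_{\Omega_s}e^F\omega_0^n$, hence, via the comparison principle and the volume--capacity inequality, an initial super-polynomial decay of $|\Omega_s|_{\omega_0^n}$. Feeding that decay back into the splitting and iterating improves the decay exponent at each step, eventually outstripping any prescribed rate $e^{-ps}$.

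The main obstacle I expect is the bookkeeping of this iteration. Because the hypothesis provides only qualitative divergence of $\Phi$, with no explicit rate, the argument must be arranged so that each round of the bootstrap improves the decay exponent by a definite amount, regardless of how slowly $\Phi$ grows. The key mechanism is that $\Phi(T)\to\infty$ allows the tail contribution in the splitting to be made arbitrarily small relative to the ``local'' term $T\,|\Omega_s|_{\omega_0^n}$ by choosing $T$ sufficiently large. Once $|\Omega_s|_{\omega_0^n}$ has been shown to decay faster than $e^{-ps}$ for every $p$, the proposition follows immediately from the layer-cake identity, with the constant $C$ depending on $p$, $\omega_0$, $\Phi$ and $\int_M e^F\Phi(e^F)\,\omega_0^n$ exactly as asserted.
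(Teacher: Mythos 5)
The decisive step in your scheme --- that ``each round of the bootstrap improves the decay exponent by a definite amount, regardless of how slowly $\Phi$ grows'' --- is exactly where the argument breaks. Whatever volume decay $|\Omega_s|\le m(s)$ you have already established, the Markov splitting can never bound $\int_{\Omega_s}e^F\omega_0^n$ better than roughly $\min\bigl(1,\,A/\Phi(1/m(s))\bigr)$, where $A=\int_Me^F\Phi(e^F)\omega_0^n$: to push the tail term $A/\Phi(T)$ below a target $\delta$ you must take $T\ge\Phi^{-1}(A/\delta)$, and then the local term $T\,|\Omega_s|$ is small only if you \emph{already} know $|\Omega_s|\le\delta/\Phi^{-1}(A/\delta)$ --- a bound whose quality is dictated by the inverse of $\Phi$ and which the iteration never supplies. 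Concretely, starting from the $\alpha$-invariant input $|\Omega_s|\le C_0e^{-\alpha_0 s}$, the splitting gives at best $\int_{\Omega_s}e^F\omega_0^n\lesssim A/\Phi(e^{\alpha_0 s})$, the comparison principle then gives $\mathrm{Cap}_{\omega_0}(\Omega_{s+1})\lesssim A/\Phi(e^{\alpha_0 s})$, and the volume--capacity inequality returns $|\Omega_{s+1}|\le C\exp\bigl(-\alpha\bigl(\Phi(e^{\alpha_0 s})/A\bigr)^{1/n}\bigr)$. This beats $e^{-ps}$ for every $p$ only if $\Phi(t)\gtrsim(\log t)^n$; for admissible but slowly growing $\Phi$ (say $\Phi(t)=\log\log t$) the ``improved'' bound is weaker than the input, and further rounds just cycle through the same $\Phi$-governed floor. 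So the bootstrap does not close under the hypothesis of the proposition, which imposes no growth rate on $\Phi$. (Minor points: since $\Phi$ is not assumed monotone, the splitting needs $\Phi$ replaced by $\inf_{t\ge T}\Phi(t)$; and the layer-cake constant is $\mathrm{Vol}(M)$ rather than $1$.)

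The paper avoids level-set iteration altogether. It introduces an auxiliary potential $\psi$ with $\sup_M\psi=0$ solving $(\omega_0+\sqrt{-1}\partial\bar{\partial}\psi)^n=A^{-1}e^F\Phi(e^F)\omega_0^n$, and proves the pointwise comparison $\varphi\ge\varepsilon\psi-C_\varepsilon$ for each $\varepsilon\in(0,\tfrac12)$ by an Alexandrov maximum principle argument; the only role of $\Phi(t)\to\infty$ there is that on the set where the ABP integrand is nonzero one has $\varepsilon n\Phi^{1/n}(e^F)A^{-1/n}<n$, which bounds $F$ from above on that set. Tian's $\alpha$-invariant, applied to the normalized $\psi$, is uniform no matter how wild its right-hand side is, and the free parameter $\varepsilon=\alpha_0/p$ amplifies the fixed exponent $\alpha_0$ into an arbitrary $p$. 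That amplification mechanism --- comparison against a second potential rather than self-improvement of level-set decay --- is what your proposal lacks; to salvage a capacity route you would need either a quantitative growth hypothesis on $\Phi$ (roughly $\Phi(t)\ge c(\log t)^n$) or the Guedj--Zeriahi machinery ($\varphi\in\mathcal{E}$, zero Lelong numbers, Skoda), which the paper deliberately sets aside in favor of an explicit estimate.
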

This result actually follows from the following theorem of Guedj and Zeriahi in \cite{GZ}:
\begin{thm}
For any positive measure $\mu$ which does not charge pluripolar sets, there exists a unique function $\varphi\in\mathcal{E}(M,\omega_0)$, such that $\mu=(\omega_0+\sqrt{-1}\partial
\bar{\partial}\varphi)^n$. 
\end{thm}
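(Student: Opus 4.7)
The plan is to prove existence and uniqueness of $\varphi\in\mathcal{E}(M,\omega_0)$ solving $(\omega_0+\sqrt{-1}\partial\bar{\partial}\varphi)^n=\mu$ by approximating $\mu$ from below by measures with bounded densities, solving the Monge-Amp\`ere equation in the bounded setting via Kolodziej's theorem, and then passing to the limit. The hypothesis that $\mu$ does not charge pluripolar sets will enter precisely when one verifies that the limit has full non-pluripolar Monge-Amp\`ere mass.

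First I would construct a nondecreasing sequence of bounded densities $f_j$, normalized so that $\mu_j:=c_j f_j\omega_0^n$ satisfies $\int_M\mu_j=\int_M\omega_0^n$ with $c_j\to 1$, and $\mu_j\to\mu$ in the weak sense. Since $\mu$ puts no mass on pluripolar sets, its Lebesgue decomposition with respect to $\omega_0^n$ together with a standard truncation yield such a sequence. By Kolodziej's theorem (item (1) of the introduction) there exists a continuous, in particular bounded, $\omega_0$-psh solution $\varphi_j$ of $(\omega_0+\sqrt{-1}\partial\bar{\partial}\varphi_j)^n=\mu_j$ with $\sup_M\varphi_j=0$.

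Next I would extract an $L^1$-convergent subsequence $\varphi_{j_k}\to\varphi$ from the compactness of the family of normalized $\omega_0$-psh functions, obtaining $\varphi\in PSH(M,\omega_0)\cap L^1(\omega_0^n)$ with $\sup_M\varphi=0$. The core step is then to show $\varphi\in\mathcal{E}(M,\omega_0)$, i.e., $\int_M\langle(\omega_0+\sqrt{-1}\partial\bar{\partial}\varphi)^n\rangle=\int_M\omega_0^n$. One argues that any loss of Monge-Amp\`ere mass along a monotone limit of bounded solutions must concentrate on the pluripolar set $\{\varphi=-\infty\}$; since $\mu$ does not charge pluripolar sets, no mass of $\mu$ can be absorbed there, so the limiting potential must be in the full-mass class $\mathcal{E}$. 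Once this is known, weak convergence $(\omega_0+\sqrt{-1}\partial\bar{\partial}\varphi_{j_k})^n\rightharpoonup\langle(\omega_0+\sqrt{-1}\partial\bar{\partial}\varphi)^n\rangle=\mu$ follows from the continuity of the non-pluripolar Monge-Amp\`ere product along monotone sequences in $\mathcal{E}$, a Bedford-Taylor-type result adapted to the compact K\"ahler setting.

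The hardest part will be the uniqueness statement within the class $\mathcal{E}(M,\omega_0)$. Kolodziej's comparison principle applies to bounded potentials only, so one cannot compare two candidate solutions $\varphi_1,\varphi_2\in\mathcal{E}(M,\omega_0)$ directly. I would use a generalized comparison principle in $\mathcal{E}$: approximate $\varphi_i$ by the bounded potentials $\varphi_i^k:=\max(\varphi_i,-k)$, apply the bounded comparison principle on the coincidence sets, and then let $k\to\infty$ while carefully tracking the non-pluripolar Monge-Amp\`ere masses. The fact that no mass escapes to the polar set in the limit is precisely what membership in $\mathcal{E}$ guarantees, and this stability under truncation is the key ingredient making uniqueness go through at the level of generality asserted in the theorem.
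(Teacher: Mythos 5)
A preliminary remark: the paper does not prove this statement at all. It is quoted as a theorem of Guedj and Zeriahi \cite{GZ}, and the whole point of Section 4 is to give an elementary direct proof of Proposition 4.1 that \emph{avoids} this pluripotential machinery. So there is no proof in the paper to compare yours against; your proposal has to be judged as a sketch of the Guedj--Zeriahi theorem itself, and as such it has three genuine gaps. First, the approximation step as you describe it fails in general: a nondecreasing sequence of bounded densities $f_j$ with $c_jf_j\omega_0^n\to\mu$ can only exist when $\mu$ is absolutely continuous with respect to $\omega_0^n$, whereas a measure charging no pluripolar set may well be singular with respect to $\omega_0^n$ (e.g.\ surface measure on a real hypersurface). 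The correct device is a Cegrell-type decomposition $\mu=f\,(\omega_0+\sqrt{-1}\partial\bar{\partial}\phi)^n$ with $\phi$ bounded $\omega_0$-plurisubharmonic and $f\in L^1$, followed by truncation of $f$; this is where the hypothesis ``does not charge pluripolar sets'' first enters, not only at the full-mass step as you claim. Second, extracting an $L^1$-convergent subsequence is not enough: the Monge--Amp\`ere operator is not continuous under $L^1$ convergence, and your approximating potentials are not monotone, so the asserted weak convergence $(\omega_0+\sqrt{-1}\partial\bar{\partial}\varphi_{j_k})^n\rightharpoonup\mu$ and the membership $\varphi\in\mathcal{E}(M,\omega_0)$ both need uniform capacity (or energy) estimates on the $\varphi_j$; supplying these is the actual content of the existence proof, and the sentence ``any loss of mass must concentrate on the polar set'' is the conclusion one is trying to prove, not an argument.

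Third, and most seriously, the uniqueness part cannot be settled by truncating to $\max(\varphi_i,-k)$ and applying the bounded comparison principle: the non-pluripolar products of the truncations do not compare with $\mu$ in the way such an argument needs, and precisely because the potentials are unbounded the comparison principle in $\mathcal{E}$ is delicate. Uniqueness (up to the obvious normalization by constants) in $\mathcal{E}(M,\omega_0)$ is a separate, genuinely hard theorem, due to Dinew, whose proof rests on inequalities for mixed Monge--Amp\`ere measures rather than on a truncation scheme; it is not contained in \cite{GZ} in the form you sketch. So your proposal is a reasonable roadmap of the standard strategy, but each of its three main steps (construction of the approximants, convergence of the Monge--Amp\`ere measures, uniqueness) is missing the key technical ingredient. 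If your goal is only what the paper actually needs --- Proposition 4.1 --- note that the authors bypass all of this with an Alexandrov-estimate argument, which is a much shorter route.
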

We refer the readers to \cite{GZ} for more details about the definition of the space $\mathcal{E}$. Among other properties, the authors also showed that any $\varphi\in\mathcal{E}$ has zero Lelong number everywhere. Hence Proposition \ref{p4.1} follows from the Skoda's integrability theorem.

Here we will give a more elementary and direct proof of Proposition \ref{p4.1} which avoids the use of pluripotential machinery. Moreover, it has the merit of explicit estimates.
The idea is similar to Theorem 5.2 of \cite{Chen-Cheng}.

To start, denote $A=\int_Me^F\Phi(e^F)\omega_0^n$ and let $\psi\in psh(\omega_0)$ be the solution of the equation:
\begin{equation*}
\begin{split}
&(\omega_0+\sqrt{-1}\partial\bar{\partial}
\psi)^n=\frac{e^F\Phi(e^F)}{A}\omega_0^n,\\
&\sup_M\psi=0.
\end{split}
\end{equation*}
Proposition \ref{p4.1} will follow from the following lemma:
\begin{lem}\label{l4.2}
For any $0<\eps<\frac{1}{2}$, there exists a constant $C_{\eps}$, which has the same dependence as described in Proposition \ref{p4.1} but additionally on $\eps$, such that $\varphi-\eps\psi\ge -C_{\eps}$.
\end{lem}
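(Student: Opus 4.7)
The plan is to combine the Bedford-Taylor comparison principle, applied to the $\omega_0$-plurisubharmonic functions $u=\varphi$ and $v_s=\eps\psi-s$ on the sublevel set $E_s=\{\varphi-\eps\psi<-s\}$, with the integrability hypothesis on $e^F\Phi(e^F)$, and to iterate the resulting estimate in $s$ until $E_s$ is empty. By approximation, I may assume $F$ is smooth so that $\varphi$ and $\psi$ are smooth and bounded; the final constant $C_\eps$ will depend only on $\eps$, $A$, $\Phi$, and $\omega_0$, and so passes to the limit. Note that $v_s$ is $\omega_0$-psh since $\omega_0+\sqrt{-1}\partial\bar{\partial}(\eps\psi)=(1-\eps)\omega_0+\eps\omega_\psi\ge 0$ when $\eps\le 1/2$.

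First, by comparison $\int_{E_s}\omega_{v_s}^n\le\int_{E_s}\omega_\varphi^n$. Expanding $\omega_{v_s}^n=((1-\eps)\omega_0+\eps\omega_\psi)^n$ via the binomial formula and dropping nonnegative mixed terms yields $\omega_{v_s}^n\ge(1-\eps)^n\omega_0^n+\eps^n\omega_\psi^n$, and substituting the two Monge-Amp\`ere equations gives
\begin{equation*}
(1-\eps)^n V(s)+\frac{\eps^n}{A}\int_{E_s}e^F\Phi(e^F)\,\omega_0^n\le\int_{E_s}e^F\,\omega_0^n,
\end{equation*}
where $V(s)=\mathrm{Vol}_{\omega_0}(E_s)$. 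To control the right-hand side by the volume, assume WLOG that $\Phi$ is nondecreasing (replacing $\Phi(t)$ by $\inf_{r\ge t}\Phi(r)$). Setting $M_0=\Phi^{-1}(2A/\eps^n)$, on $\{e^F>M_0\}$ one has $e^F\le\frac{\eps^n}{2A}e^F\Phi(e^F)$, and a simple split of $E_s$ gives $\int_{E_s}e^F\,\omega_0^n\le M_0 V(s)+\frac{\eps^n}{2A}\int_{E_s}e^F\Phi(e^F)\,\omega_0^n$. Combining these two inequalities and rearranging yields the key estimate
\begin{equation*}
\int_{E_s}e^F\Phi(e^F)\,\omega_0^n\le L_\eps V(s),
\end{equation*}
with $L_\eps$ depending only on $\eps$, $A$, $\Phi$. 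Since $\omega_\psi^n=(e^F\Phi(e^F)/A)\,\omega_0^n$, this is equivalent to saying that the $\omega_\psi^n$-mass of $E_s$ is controlled by the $\omega_0^n$-volume of $E_s$.

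The remaining task is a De Giorgi-type iteration forcing $V(s)$ to drop to $0$ in finitely many steps once $s$ exceeds some threshold. The starting point is $V(s)\to 0$ as $s\to\infty$, which holds because $\varphi,\psi\in L^1(\omega_0^n)$ so $\varphi-\eps\psi>-\infty$ almost everywhere. To close the iteration one needs a superlinear recurrence of the form $V(s+\delta)\le CV(s)^{1+\alpha}$ for some $\alpha>0$; I would derive this from the integrated comparison principle obtained by integration by parts using $w_s^k=((\eps\psi-s-\varphi)_+)^k$ as test function, combined with H\"older's inequality and the uniform $L^q(\omega_0^n)$ bound (valid for every $q<\infty$) satisfied by any normalized $\omega_0$-plurisubharmonic function. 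The main obstacle is exactly this last step: the bare comparison produces only a \emph{linear} coupling between $V(s)$ and $\int_{E_s}e^F\Phi(e^F)\,\omega_0^n$, and upgrading this to the superlinear recurrence demands, just as in Chen-Cheng's Theorem 5.2, a careful bootstrap on the test-function exponent $k$ together with the standard integral bounds on quasi-plurisubharmonic functions. Once the superlinear decay is in place, De Giorgi iteration delivers $V(s_0)=0$ for a finite $s_0=s_0(\eps)$, which is precisely the bound $\varphi-\eps\psi\ge-s_0=:-C_\eps$.
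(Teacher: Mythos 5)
Your first two steps are correct and even elegant: the comparison principle on $E_s=\{\varphi-\eps\psi<-s\}$ together with the binomial lower bound $\omega_{v_s}^n\ge(1-\eps)^n\omega_0^n+\eps^n\omega_\psi^n$ and the Orlicz splitting at the level $M_0=\Phi^{-1}(2A/\eps^n)$ do give $\int_{E_s}e^F\Phi(e^F)\,\omega_0^n\le L_\eps V(s)$, hence also $\int_{E_s}\omega_\varphi^n\le C_\eps' V(s)$. But the proof is not complete, and the missing piece is exactly the one you flag yourself: the superlinear recurrence $V(s+\delta)\le C\,V(s)^{1+\alpha}$ is never established, and nothing you have written implies it. A purely linear coupling between the Monge--Amp\`ere mass of $E_s$ and its volume is compatible with, say, $V(s)=e^{-s}$, which never vanishes, so no De Giorgi iteration can start from what you have. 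The sketched upgrade via test functions $((\eps\psi-s-\varphi)_+)^k$, H\"older, and uniform $L^q$ bounds for $\omega_0$-psh functions is only a plan; carrying it out under the hypothesis that $\Phi$ merely tends to infinity (no growth rate) essentially forces you to rebuild the Kolodziej capacity machinery (comparison with capacity for the unbounded comparison function $\eps\psi$, then measure--capacity domination), and it is not clear the volume-based version you propose closes at all. So as written there is a genuine gap at the heart of the argument.

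For contrast, the paper avoids any iteration: it works at the minimum point $p_0$ of $\varphi-\eps\psi$, computes $\Delta_\varphi(\varphi-\eps\psi)\le n-\eps n\Phi^{1/n}(e^F)A^{-1/n}-(1-\eps)tr_\varphi g$, multiplies $e^{-\delta(\varphi-\eps\psi)}$ (with $\delta=\alpha_0/2n$) by a cutoff and applies the Alexandrov--Bakelman--Pucci estimate on a small ball around $p_0$. The key observation is that the negative part $(\eps n\Phi^{1/n}(e^F)A^{-1/n}-n)^-$ is supported where $F$ is bounded above by a constant depending only on $\eps,\Phi,A$, so the ABP integral is controlled by $\int_M e^{-\alpha_0\varphi}\omega_0^n$, which Tian's $\alpha$-invariant lemma bounds uniformly. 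If you want to salvage your route, you would need to either prove the measure--capacity domination needed for a Kolodziej-type iteration relative to $\eps\psi$, or switch to the pointwise ABP argument; with only $\int_{E_s}\omega_\varphi^n\le C_\eps' V(s)$ in hand, the conclusion $\varphi-\eps\psi\ge-C_\eps$ does not follow.
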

To see that Proposition \ref{p4.1} follows from this lemma, we recall the following lemma of Tian, which is based on the $L^2$ estimate of H\"ormander:
\begin{lem}\label{l4.3}
Let $(M,\omega_0)$ be a compact K\"ahler manifold. Then there exists a constant $\alpha_0>0$, $C_0>0$, such that for all $\varphi\in psh(\omega_0)$, we have $\int_Me^{-\alpha_0(\varphi-\sup_M\varphi)}\omega_0^n\le C_0$.
\end{lem}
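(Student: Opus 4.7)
The plan is a compactness-plus-local-Skoda argument, with H\"ormander's $L^2$ estimate supplying the fundamental integrability ingredient. After replacing $\varphi$ by $\varphi-\sup_M\varphi$, it suffices to treat the class $\mathcal{F}=\{\varphi\in\mathrm{PSH}(M,\omega_0):\sup_M\varphi=0\}$ and produce a uniform bound on $\int_M e^{-\alpha_0\varphi}\omega_0^n$ for $\varphi\in\mathcal{F}$.

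First I would invoke the standard compactness of normalized $\omega_0$-plurisubharmonic functions: the class $\mathcal{F}$ is relatively compact in $L^1(M,\omega_0^n)$, and an $L^1$ limit of elements of $\mathcal{F}$ is again in $\mathcal{F}$. In particular $\|\varphi\|_{L^1}\le C_1$ uniformly. Cover $M$ by finitely many coordinate patches $U_j\Subset V_j$ equipped with smooth local K\"ahler potentials $\rho_j$ satisfying $\sqrt{-1}\partial\bar\partial\rho_j=\omega_0$ on $V_j$; then $u_j:=\varphi+\rho_j$ is an ordinary plurisubharmonic function on $V_j$, with a uniform $L^1$ bound and uniform upper bound independent of $\varphi$.

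Next I would control Lelong numbers uniformly. The positive $(1,1)$-current $\sqrt{-1}\partial\bar\partial u_j=\omega_\varphi$ has total mass $\int_M\omega_\varphi\wedge\omega_0^{n-1}=\int_M\omega_0^n$, fixed by the cohomology class $[\omega_0]$. Demailly's comparison of Lelong numbers with total mass then yields a constant $\nu_{\max}=\nu_{\max}(M,\omega_0)$ such that $\nu(u_j,z)\le\nu_{\max}$ for every $\varphi\in\mathcal{F}$ and every $z\in U_j$.

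The final and main step is to invoke a quantitative version of Skoda's local integrability theorem: for any plurisubharmonic $u$ on $V\subset\mathbb{C}^n$ with $\sup_V u\le C$, $\|u\|_{L^1(V)}\le C'$, and $\nu(u,z)\le\nu_{\max}$ for $z\in U\Subset V$, one has $\int_U e^{-\alpha u}\,dV\le C''$ for every $\alpha<2/\nu_{\max}$, with $C''$ depending only on $\alpha,C,C',\nu_{\max}$ and the geometry of $U\Subset V$. This quantitative version is the principal obstacle; it is obtained by applying H\"ormander's $L^2$ estimate to $\bar\partial$ on the weighted space $L^2(e^{-2\alpha u})$ and then using the sub-mean-value property of plurisubharmonic functions to convert the holomorphic-function construction into integrability of the weight itself. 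Summing over the finite cover $\{U_j\}$ and choosing any $\alpha_0<2/\nu_{\max}$ produces the desired uniform constant $C_0$.
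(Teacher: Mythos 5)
Your overall architecture is a legitimate route to this lemma, and it differs from the paper, which does not prove the statement at all but quotes it as Tian's theorem (based on H\"ormander's $L^2$ estimate) from \cite{Tian}. The first two steps are fine: normalized $\omega_0$-psh functions do have a uniform $L^1$ bound, local potentials reduce matters to genuinely psh functions on coordinate charts, and the Lelong numbers of $\omega_\varphi$ are uniformly bounded by a constant times the cohomological mass $\int_M\omega_\varphi\wedge\omega_0^{n-1}=\int_M\omega_0^n$, by monotonicity of the Lelong ratio. The problem is the step you yourself flag as the principal obstacle. The uniform (quantitative) Skoda theorem you need is a true statement --- it is Zeriahi's uniform version of Skoda's integrability theorem, with constants depending only on the $L^1$ bound and the Lelong bound --- but the mechanism you sketch for proving it does not work as described. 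H\"ormander's $L^2$ existence theorem for $\bar\partial$ with weight $e^{-2\alpha u}$ produces holomorphic functions of finite weighted norm; it does not, by itself, give integrability of the weight (the zero function always has finite norm, and producing a nonvanishing holomorphic function with a weighted bound at a prescribed point is exactly an Ohsawa--Takegoshi-type statement, not plain H\"ormander). In the literature the implication ``$\nu(u,x)<2/\alpha\Rightarrow e^{-\alpha u}\in L^1$ near $x$'' and its uniform version are proved the other way around: either by the local Riesz decomposition of $u$ into a logarithmic potential of $\Delta u$ plus a harmonic function, followed by Jensen's inequality and the mass bound on small balls, or by an Ohsawa--Takegoshi induction on dimension; the $L^2$/$\bar\partial$ machinery is what one uses for the converse-type statements (non-integrability forces vanishing, semicontinuity of Lelong numbers). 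So, as a self-contained proof, your argument has a genuine gap precisely at its core step; if instead you cite Zeriahi's theorem (or Skoda plus a compactness argument), the proof closes and is a perfectly good pluripotential-theoretic alternative to quoting Tian.

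Two smaller remarks. First, for this lemma you do not need Lelong numbers or the sharp threshold $2/\nu_{\max}$ at all: the uniform $L^1$ bound gives, in each coordinate ball, a point where the local psh function is bounded below by a fixed constant, and the classical local lemma (psh $u\le 0$ on $B(0,2)$ with $u(z_0)\ge -C$ implies $\int_{B(0,1)}e^{-\alpha u}\le C'$ for a dimensional $\alpha>0$, H\"ormander's Theorem 4.4.5) already yields the statement after summing over a finite cover; this is essentially Tian's original argument and is more elementary than the Lelong-number route. Second, if you do keep the Lelong-number route, be careful to fix the normalization of $\nu$ so that the threshold really is $2/\nu_{\max}$; this only affects constants, but it should be stated.
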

For any $p>0$, we see from Lemma \ref{l4.2} that
\begin{equation*}
-p\varphi\le pC_{\eps}-p\eps\psi.
\end{equation*}
Hence if we take $\eps$ such that $p\eps=\alpha_0$ and apply Lemma \ref{l4.3}, we see that
\begin{equation*}
\int_Me^{-p\varphi}\omega_0^n\le e^{pC_{\eps}}\int_Me^{-p\eps\psi}\omega_0^n\le e^{pC_{\eps}}C_0.
\end{equation*}
Now it only remains to prove Lemma \ref{l4.2}.
\begin{proof}
First we can compute
\begin{equation*}
\begin{split}
\Delta_{\varphi}&\psi=g_{\varphi}^{i\bar{j}}
\partial_{i\bar{j}}\psi=g_{\varphi}^{i\bar{j}}(g_{i\bar{j}}+\psi_{i\bar{j}})-tr_{\varphi}g\ge n\big(\det g_{\varphi}^{i\bar{j}}\det(g_{i\bar{j}}+\psi_{i\bar{j}})\big)^{\frac{1}{n}}-tr_{\varphi}g\\
&\ge n\big(e^{-F}e^F\Phi(e^F)A^{-1}\big)^{\frac{1}{n}}-tr_{\varphi}g=n\Phi^{\frac{1}{n}}(e^F)A^{-\frac{1}{n}}-tr_{\varphi}g.
\end{split}
\end{equation*}
Therefore
\begin{equation*}
\Delta_{\varphi}(\varphi-\eps\psi)=n-tr_{\varphi}g-\eps\Delta_{\varphi}\psi\le n-\eps n\Phi^{\frac{1}{n}}(e^F)A^{-\frac{1}{n}}-(1-\eps)\eps tr_{\varphi}g.
\end{equation*}
Suppose that the function $\varphi-\eps\psi$ achieves minimum at $p_0$. We may choose a ball $B_{d_0}(p_0)$ centered at $p_0$, such that under normal coordinates at $p_0$, we have $\frac{1}{2}\le \det g_{i\bar{j}}\le 2$. Let $\eta:M\rightarrow\mathbb{R}^+$ be a cut-off function on $M$, such that $\eta(p_0)=1$, $\eta\equiv 1-\theta$ outside the ball $B_{d_0}(p_0)$, with $|\nabla\eta|\le\frac{\theta}{d_0}$, $|D^2\eta|\le\frac{\theta}{d_0^2}$. Here $\theta>0$ is a parameter to be chosen(small).

Now we choose $\delta=\frac{\alpha_0}{2n}$, where $\alpha_0>0$ is the $\alpha$-invariant given by Lemma \ref{l4.3}. We can compute
\begin{equation*}
\begin{split}
&\Delta_{\varphi}\big(
e^{-\delta(\varphi-\eps\psi)}\big)=
e^{-\delta(\varphi-\eps\psi)}\delta^2|\nabla_{\varphi}(\varphi-\eps\psi)|^2_{\varphi}-\delta e^{-\delta(\varphi-\eps\psi)}\Delta_{\varphi}(\varphi-\eps\psi)\\
&\ge \delta^2 e^{-\delta(\varphi-\eps\psi)}|\nabla_{\varphi}(\varphi-\eps\psi)|^2_{\varphi}+
e^{-\delta(\varphi-\eps\psi)}\delta(\eps n\Phi^{\frac{1}{n}}(e^F)A^{-\frac{1}{n}}+(1-\eps)tr_{\varphi}g-n).
\end{split}
\end{equation*}
Hence
\begin{equation}\label{4.1}
\begin{split}
\Delta_{\varphi}&\big(
e^{-\delta(\varphi-\eps\psi)}\eta\big)
=e^{-\delta(\varphi-\eps\psi)}\delta^2|\nabla_{\varphi}(\varphi-\eps\psi)|^2_{\varphi}\eta\\
&+e^{-\delta(\varphi-\eps\psi)}
\delta\big(\eps n\Phi^{\frac{1}{n}}(e^F)A^{-\frac{1}{n}}+(1-\eps)tr_{\varphi}g-n\big)\eta+
e^{-\delta(\varphi-\eps\psi)}
\Delta_{\varphi}\eta\\
&+e^{-\delta(\varphi-\eps\psi)}(-\delta)\nabla_{\varphi}(\varphi-\eps\psi)
\cdot_{\varphi}\nabla_{\varphi}\eta\\
&\ge e^{-\delta(\varphi-\eps\psi)}\delta\big(\eps n\Phi^{\frac{1}{n}}(e^F)A^{-\frac{1}{n}}+(1-\eps)tr_{\varphi}g-\frac{|D^2\eta|}{\eta}tr_{\varphi}g-\frac{|\nabla\eta|^2}{\eta^2}tr_{\varphi}g-n\big).
\end{split}
\end{equation}
We then choose the parameter $\theta$ sufficiently small so that
\begin{equation*}
(1-\eps)-\frac{\theta}{d_0^2(1-\theta)}-\frac{\theta^2}{d_0^2(1-\theta)^2}>0.
\end{equation*}
With this choice, we conclude from (\ref{4.1}) that
\begin{equation}
\Delta_{\varphi}\big(e^{-\delta(
\varphi-\eps\psi)}\eta\big)\ge e^{-\delta(\varphi-\eps\psi)}\delta \big(\eps n\Phi^{\frac{1}{n}}(F)A^{-\frac{1}{n}}-n\big).
\end{equation}
Apply the Alexandrov estimate in the ball $B_{d_0}(p_0)$, we obtain that
\begin{equation}\label{4.3}
\begin{split}
\sup_{B_{d_0}(p_0)}&e^{-\delta(\varphi-\eps\psi)}\eta\le \sup_{\partial B_{d_0}(p_0)}e^{-\delta(\varphi-\eps\psi)}\eta\\
&+C_n
d_0\bigg(\int_{B_{d_0}(p_0)}e^{-2n\delta(\varphi-\eps\psi)}\delta^{2n}
\big((\eps n\Phi^{\frac{1}{n}}(F)A^{-\frac{1}{n}}-n)^-\big)^{2n}e^{2F}
\omega_0^n\bigg)^{\frac{1}{2n}}.
\end{split}
\end{equation}
In order for $(\eps n\Phi^{\frac{1}{n}}(F)A^{-\frac{1}{n}}-n)^-$ to be nonzero, we must have that
\begin{equation*}
\eps n\Phi^{\frac{1}{n}}(F)A^{-\frac{1}{n}}-n<0.
\end{equation*}
This gives an upper bound for $F$, say $F\le C_1$, where $C_1$ has the said dependence as in the lemma.
Hence
\begin{equation}
\begin{split}
\textrm{The integral in (\ref{4.3})}&\le \int_{B_{d_0}(p_0)\cap\{F\le C_1\}}e^{-2n\delta(\varphi-\eps\psi)}
\delta^{2n}n^{2n}e^{2C_1}\omega_0^n\\
&\le \delta^{2n}n^{2n}e^{2C_1}
\int_Me^{-\alpha_0\varphi}\omega_0^n\le \delta^{2n}n^{2n}e^{2C_1}C_0.
\end{split}
\end{equation}
On the other hand, note that $\eta\le 1-\theta$ on $\partial B_{d_0}(p_0)$, and $e^{-\delta(\varphi-\eps\psi)}$ achieves maximum over $M$ at $p_0$. Then (\ref{4.3}) implies:
\begin{equation}
\sup_Me^{-\delta(\varphi-\eps\psi)}\le \frac{1}{\theta}C_nd_0\delta^{2n}n^{2n}e^{2C_1}C_0.
\end{equation}
This finishes the proof.
\end{proof}


\begin{thebibliography}{05}
\bibitem{Blocki}
Z. Blocki:
\newblock \it A gradient estimate in the Calabi-Yau theorem.
\newblock  {\em Math Annalen, (2)334 (2009).}

 \bibitem{Ca1}
Luis A, Caffarelli:
\newblock \it Interior $W^{2,p}$ estimates for solutions of the Monge-{A}mp$\grave{\text{e}}$re equation.
\newblock  {\em Ann. of Math. (2)131(1990).}



\bibitem{Ca2}
Luis A. Caffarelli:
\newblock \it Some regularity properties of solutions of {M}onge-{A}mp$\grave{\text{e}}$re equation.
\newblock {\em Comm. Pure Appl. Math}, 44, no. 8-9(1991).

\bibitem{Chen-Cheng}
X. X. Chen and J. Cheng:
\newblock \it On the constant scalar curvarure K\"ahler metrics(I): Apriori estimates.
\newblock {\em arXiv: 1712.06697}.


 \bibitem{Chen-He}
X. X. Chen and W He:
\newblock \it The complex  Monge-{A}mp$\grave{\text{e}}$re equation on compact K\"ahler manifolds.
\newblock  {\em Math. Annalen, (1)354 (2012).}

 \bibitem{Chen-Wang}
X. X. Chen and Y.-Q. Wang:
\newblock \it $C^{2,\alpha}$ estimate for Monge-{A}mp$\grave{\text{e}}$re equations with H\"older continuius right hand side.
\newblock  {\em Ann. of Global Analysis and Geometry, (2)49 (2016).}


\bibitem{Conlon-Rochon17}
R. J. Conlon, F. Rochon:
\newblock New examples of complete Calabi Yau metrics on $\mathbb{C}^n (n\geq 3).$
\newblock arXiv 1705.08788.

\bibitem{DZZ}
S. Dinew, X. Zhang and X.-W. Zhang:
\newblock \it The $C^{2,\alpha}$ estimate of complex Monge-{A}mp$\grave{\text{e}}$re equation.
\newblock {\em Indiana Univ. Math J, 60(2011).}
 
 \bibitem{GZ}
V. Guedj and A. Zeriahi:
\newblock \it The weighted Monge-{A}mp$\grave{\text{e}}$re energy of quasiplurisubharmonic functions.
\newblock {\em Journal of Functional Analysis, 250(2007).}


\bibitem{He12}
W.Y. He:
\newblock On the regularity of the complex Monge-Amp$\grave{\text{e}}$re equations. 
\newblock Proc. Amer. Math. Soc. 140 (2012), no. 5, 1719-1727.



\bibitem{CLZ}
C. Li, J.-Y. Li and X. Zhang:
\newblock \it A $C^{2,\alpha}$-estimate of the complex Monge-{A}mp$\grave{\text{e}}$re equation.
\newblock  {\em Journal of Funct Anal, (1)275 (2018).}



\bibitem{LiYang-17}
Y. Li:
\newblock A new complete Calabi-Yau metric in $\mathbb{C}^n. $
\newblock   arXiv:1705.07026.

\bibitem{GS2017}
G. Sz\'{e}kelyhidi.
\newblock Degenerations of $\mathbb{C}^n$ and Calabi-Yau metrics.
\newblock arXiv:1706.00357.

\bibitem{Tian}
G. Tian:
\newblock On K\"ahler-Einstein metrics on certain K\"ahler manifolds with $c_1(M)>0$. \newblock Invent. Math 89(1987).


\bibitem{Wang}
Y Wang:
\newblock \it A remark on $C^{2,\alpha}$-regularity of the complex Monge-{A}mp$\grave{\text{e}}$re equation.
\newblock  {\em arXiv: 1111.0902.}
 

\bibitem{Yau78}
S.-T. Yau:
\newblock On the {R}icci curvature of a compact {K}\"ahler manifold and the
  complex Monge-Amp$\grave{\text{e}}$re equation, ${I}^*$.
\newblock {\em Comm. Pure Appl. Math.}, 31:339--441, 1978.
\end{thebibliography}
\end{document}